\documentclass[11pt]{amsart}

\pdfoutput=1

\usepackage[T1]{fontenc}
\usepackage{lmodern}

\usepackage[text={370pt,584pt},centering]{geometry}  

\usepackage{esint,amssymb}
\usepackage{graphicx}
\usepackage{subcaption}
\usepackage{MnSymbol}
\usepackage{mathtools}
\usepackage[colorlinks=true, pdfstartview=FitV, linkcolor=blue, citecolor=blue, urlcolor=blue,pagebackref=false]{hyperref}
\usepackage{microtype}

\usepackage{bm}
\usepackage{dsfont}
\usepackage{mathrsfs}
\usepackage{xcolor}
\usepackage{orcidlink}

\usepackage{caption}
\newtheorem{proposition}{Proposition}
\newtheorem{theorem}[proposition]{Theorem}

\theoremstyle{remark}
\newtheorem{remark}[proposition]{Remark}

\theoremstyle{definition}

\numberwithin{equation}{section}
\numberwithin{proposition}{section}

\newcommand{\R}{\mathbb{R}}

\newcommand{\ep}{\varepsilon}
\newcommand{\eps}{\varepsilon}

\renewcommand{\le}{\leqslant}
\renewcommand{\ge}{\geqslant}
\renewcommand{\leq}{\leqslant}

\renewcommand{\subset}{\subseteq}
\renewcommand{\bar}{\overline}

\newcommand{\td}{\widetilde}

\newcommand{\Ll}{\left}
\newcommand{\Rr}{\right}
\renewcommand{\d}{\mathrm{d}}
\newcommand{\dr}{\partial}

\newcommand{\mcl}{\mathcal}

\newcommand{\msc}{\mathscr}
\newcommand{\al}{\alpha}

\newcommand{\de}{\delta}

\DeclareMathOperator{\dist}{dist}

\newcommand{\E}{\mathbb E}
\renewcommand{\phi}{\varphi}

\begin{document}

\author[Jean-Christophe Mourrat]{Jean-Christophe Mourrat\,\orcidlink{0000-0002-2980-725X}}
\address[Jean-Christophe Mourrat]{Department of Mathematics, ENS Lyon and CNRS, Lyon, France}

\author{Loucas Pillaud-Vivien}
\address[Loucas Pillaud-Vivien]{CERMICS, CNRS, ENPC, Institut Polytechnique de Paris, Marne-la-Vallée, France}

\keywords{}
\subjclass[2010]{}
\date{}

\title[Oscillating solutions to MFL-DA]{Oscillating solutions to the mean-field Langevin descent-ascent flow}

\begin{abstract}
We present a counterexample to the statement of convergence of the mean-field Langevin descent-ascent flow on $\R^2$. We consider payoff functions that are shaped as a double well in each coordinate, and for which the deterministic dynamics admits a limit cycle. When the coupling between the two coordinates is sufficiently strong and the entropic regularization sufficiently small, we show that the mean-field dynamics remains close to this cyclic behavior, and in particular, does not converge.
\end{abstract}

\maketitle

%
%
%
%
%
%

\section{Introduction}
\label{s.intro}

A natural procedure for finding a saddle point of a function $f :  \mathbb{R}^d \times \mathbb{R}^d \to \mathbb{R}$ is to iteratively take small steps in the direction of $(-\nabla_x f, \nabla_y f)$. In the limit of infinitesimally small steps, we obtain a continuous dynamics called the gradient descent-ascent flow. It is well known that this gradient descent-ascent flow can fail to converge, even for simple functions such as $f(x,y) = xy$, for which the flow circles around the origin. A natural idea to remedy this is to add entropic regularization, which amounts to optimizing over probability measures rather than points, and to study the resulting mean-field Langevin descent-ascent (MFL-DA) flow.
Given $\varepsilon > 0$ and a smooth function $f : \mathbb{R} \times \mathbb{R} \to \mathbb{R}$, the MFL-DA flow is given by
\begin{equation}
    \label{e.MFLDA}
        \begin{cases}
        \displaystyle{\dr_t \mu_t = \ep \, \dr_{x}^2 \mu_t + \dr_x \Ll( \mu_t \, \dr_x \int f(x,y) \, \d \nu_t(y) \Rr) , }
        \\
        \displaystyle{\dr_t \nu_t = \ep \, \dr_{y}^2 \nu_t -\dr_y \Ll( \nu_t \, \dr_y \int f(x,y) \, \d \mu_t(x) \Rr) .}
        \end{cases}
\end{equation}
This is the Wasserstein gradient descent-ascent flow for the functional
\begin{equation*}  
\mcl F(\mu, \nu) = \iint f \, \d \mu \, \d \nu + \ep \, H(\mu) - \ep \, H(\nu),
\end{equation*}
where $H(\mu) = \int \log \Ll( \frac{\d \mu}{\d x} \Rr) \, \d \mu$ is the (negative) differential entropy. The laws $(\mu_t, \nu_t)$ are the marginal laws of the processes $(X_t, Y_t)$ solving
\begin{equation}
\label{e.SDE.f}
\begin{cases}
\displaystyle{\d X_t = \Ll( -\int \dr_x f(X_t, y) \, \d \nu_t(y) \Rr) \d t + \sqrt{2\ep} \, \d B_t, } \\
\displaystyle{ \d Y_t = \Ll( \int \dr_y f(x, Y_t) \, \d \mu_t(x) \Rr) \d t + \sqrt{2\ep} \, \d B'_t,}
\end{cases}
\end{equation}
where $B_t, B'_t$ are independent standard Brownian motions. 

This framework was introduced and studied in \cite{domingo2020mean}, where it is shown that if the flow converges, then the limit must be the unique entropy-regularized mixed Nash equilibrium. Whether convergence actually holds was posed as an open problem in~\cite{wang2024open}. Part of the motivation for this question is that the entropic regularization endows the problem with strong structural properties: for instance, under mild assumptions, the partial equilibrium measures satisfy uniform log-Sobolev inequalities, which suggests that the regularization might tame the cycling behavior inherent to the deterministic dynamics.
Several partial results support this intuition. Local convergence has been established independently in~\cite{seo2026local} and~\cite{wang2026local}. Global convergence is known to hold under two-timescale modifications of the flow~\cite{lu2023two, ma2022provably}; see also \cite{kim2024symmetric} for other modified dynamics. However, in a companion paper~\cite{mourrat2026pl}, one of us showed that in the finite-dimensional setting, gradient descent-ascent can fail to converge even under a two-sided Polyak–Łojasiewicz condition — the natural finite-dimensional analog of the uniform log-Sobolev assumption~\cite{otto2000generalization}. The question of global convergence in the mean-field setting, however, remained open.

The point of this paper is to show that the answer is negative: there exist functions for which the MFL-DA flow stays in a compact region and yet does not converge. More precisely, we consider functions indexed by $\alpha \ge 1$ of the form: for every $x,y \in \R$,
\begin{equation}  
\label{e.def.f}
f_\al(x,y) := \al xy - \frac{x^2}{2} + \frac{x^4}{12} + \frac{y^2}{2} - \frac{y^4}{12}\, .
\end{equation}
For this choice of the function $f$, the well-posedness of the system \eqref{e.MFLDA} follows from \cite{dosreis2019freidlin, hong2024mckean}. As will be confirmed below (see Propositions~\ref{p.2moment.control} and \ref{p.moment.bounds}), the quartic large-scale behavior of $f$ ensures the confinement of the flow. 
Computing the partial derivatives: $\dr_x f_\al(x,y) = \al y - x + x^3/3,\, \dr_y f_\al(x,y) = \al x + y - y^3/3$ and writing $m_t := \int x \, \d \mu_t(x) = \E[X_t]$ and $n_t := \int y \, \d \nu_t(y) = \E[Y_t]$ for the means, the SDE system~\eqref{e.SDE.f} becomes
\begin{equation}
\label{e.SDEexplicit}
\begin{cases}
    \displaystyle{\d X_t = \Ll( -\al n_t + X_t - X_t^3/3 \Rr) \d t + \sqrt{2\ep} \, \d B_t, }\\
    \displaystyle{\d Y_t = \Ll( \al m_t + Y_t - Y_t^3/3 \Rr) \d t + \sqrt{2\ep} \, \d B'_t \, .}
\end{cases}
\end{equation}
For this choice, the deterministic dynamics ($\varepsilon = 0$, Dirac initial condition) admits a limit cycle $\msc C_\alpha$ that circles around the origin and is contained in the annulus
\begin{equation}
\label{e.def.mclA}
\mathcal{A} :=\{(m,n) \in \R^2 \ : \ 3 \le m^2 + n^2 \le 6\},
\end{equation}
see Proposition~\ref{p.limitcycle}. When the coupling parameter $\alpha$ is sufficiently large and the entropic regularization $\varepsilon$ is sufficiently small, we show that the stochastic mean-field dynamics remains close to this limit cycle for all times, and in particular does not converge. 
\begin{theorem}
\label{t.main}
For every $\de \in (0,1]$, $\al < +\infty$ sufficiently large, and $\ep > 0$ sufficiently small, the following holds. If $(X_0, Y_0)$ is deterministic and belongs to $\msc C_\al$, then for every $t_0 \ge 0$, there exists a parametrization $(\bar m_t, \bar n_t)_{t \in [t_0,t_0 + T_\al]}$ of the curve $\msc C_\al$ such that for every $t \in [t_0, t_0 + T_\al]$, we have
\begin{equation*}  
\E[|X_t - \bar m_t|^2] + \E[|Y_t - \bar n_t|^2] \le \delta.
\end{equation*}
In this circumstance, the means $(\E[X_t], \E[Y_t])$ therefore circle around the origin for all times while remaining away from it; in particular, the measures $(\mu_t, \nu_t)$ do not converge as $t$ tends to infinity.
\end{theorem}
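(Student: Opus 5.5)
The plan is to compare the McKean--Vlasov dynamics \eqref{e.SDEexplicit} with the deterministic flow
\begin{equation*}
\dot{\bar m} = -\al \bar n + \bar m - \bar m^3/3, \qquad \dot{\bar n} = \al \bar m + \bar n - \bar n^3/3,
\end{equation*}
which is what \eqref{e.SDEexplicit} reduces to when $\ep=0$ with Dirac data. The comparison is run over one period $T_\al$ at a time, using the orbital stability of $\msc C_\al$ to stop errors from accumulating across periods. Two sources of error must be controlled. One is the noise $\sqrt{2\ep}$. The other is the nonlinearity: the mean of $X_t - X_t^3/3$ is not $m_t - m_t^3/3$ unless the law is concentrated. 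Set $D_t := \E[|X_t-\bar m_t|^2] + \E[|Y_t-\bar n_t|^2]$, where $(\bar m,\bar n)$ is a solution of the deterministic ODE.

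The first step is an Itô computation for $D_t$. Write $u = X_t - \bar m_t$ and $v = Y_t - \bar n_t$. The coupling terms contribute $-2\al\,\E[u](n_t-\bar n_t) + 2\al\,\E[v](m_t - \bar m_t)$, and since $m_t-\bar m_t = \E[u]$ and $n_t - \bar n_t = \E[v]$, these cancel exactly. This is the key structural point: the large antisymmetric coupling drops out of the energy estimate. The remaining drift is $\E[(u - (X^3-\bar m^3)/3)\,u]$ plus the analogous $v$-term. Using $(X^3-\bar m^3)u = u^2(X^2+X\bar m+\bar m^2) \ge 0$, this is bounded by $\E[u^2]$. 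With the Itô correction we get $\frac{\d}{\d t}D_t \le 2D_t + 4\ep$, so by Grönwall $D_t \le e^{2(t-t_0)}(D_{t_0} + 2\ep)$. Invoking the moment bounds of Propositions~\ref{p.2moment.control} and \ref{p.moment.bounds} guarantees that the solution stays in a compact region, so all these quantities are finite.

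This crude bound is not enough on its own, because it grows by the factor $e^{2T_\al}$ each period. I therefore need to know that $T_\al$ is bounded, in fact $T_\al \approx 2\pi/\al \to 0$ for large $\al$ since the rotation has speed $\al$, and I also need a contraction mechanism. For contraction, I would choose the parametrization afresh at each period. At time $t_0$ (and at each $t_0 + kT_\al$) I take $(\bar m_{t_0},\bar n_{t_0})$ to be a suitable projection of the means $(m_{t_0},n_{t_0})$ onto $\msc C_\al$, with the phase chosen along the isochron. This splits $D$ into a variance part and a mean-to-curve distance. The variance part, $\mathrm{Var}(X)+\mathrm{Var}(Y)$, does not see the coupling at all, because the coupling is deterministic in the mean. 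Its evolution is governed by the single-particle double-well dynamics plus noise, which should keep it at order $\ep$ uniformly in time for small $\ep$. This uses the Dirac initial condition and the fact that over a time of order one the trajectories of the double well stay close. For the mean-to-curve distance, transversal attraction to the limit cycle is obtained by averaging the cubic damping over one fast rotation: in polar coordinates $\dot r = r - \tfrac{r^3}{3}(\cos^4\theta+\sin^4\theta)$ up to $O(\mathrm{Var})$ corrections, and the average gives a radius between $\sqrt3$ and $\sqrt6$, consistent with Proposition~\ref{p.limitcycle}. Over one period the transversal distance therefore contracts by a factor $1 - c/\al$, up to errors of order $\mathrm{Var}/\al$.

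Combining these, I would show that the set $\{D \le \delta\}$, in the sense (variance $\le C\ep$, distance to curve $\le \delta/2$), is invariant from one period to the next. The Grönwall estimate over the short time $T_\al$ then gives the bound $\le \delta$ at intermediate times. The main obstacle I expect is the uniform-in-time control of the variance, that is, ruling out the possibility that the law splits between the two wells of $\pm\sqrt3$-type. Here I would first try to exploit the fast rotation: the effective forcing $-\al n_t$ sweeps $X$ across the wells coherently, so that the linearization $1 - X^2$, averaged along the cycle, is negative on average. For the quantitative averaging step, I would likely need $\al$ large compared with the constants coming from Proposition~\ref{p.limitcycle}.
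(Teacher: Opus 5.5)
Your energy estimate for $D_t$ is correct and is a nice observation. Since $\E[X_t-\bar m_t]=m_t-\bar m_t$ and $\E[Y_t-\bar n_t]=n_t-\bar n_t$, the $\al$-terms cancel exactly, and monotonicity of $x\mapsto x^3$ gives $D_t+2\ep\le e^{2(t-t_0)}(D_{t_0}+2\ep)$ uniformly in $\al$. This gives an $\al$-uniform comparison over one period without the Lipschitz bound $L_\al T_\al\le C$ used in the paper. It correctly reduces the theorem to showing that $\E[\td X_t^2]+\E[\td Y_t^2]$ and $\dist(p_t,\msc C_\al)$ stay small for all times (take $\bar p_{t_0}$ to be the nearest point of $\msc C_\al$; no isochron is needed). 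But this reduction is the easy part. The step you flag as the main obstacle, uniform-in-time control of the variance, is not supplied, and the heuristics you offer for it are partly wrong. The variance does see the coupling, through the mean: $\td X_t=X_t-m_t$ satisfies
\begin{equation*}
\tfrac12\,\dr_t\E[\td X_t^2]=(1-m_t^2)\,\E[\td X_t^2]-m_t\,\E[\td X_t^3]-\tfrac13\,\E[\td X_t^4]+\ep,
\end{equation*}
and $1-m_t^2>0$ whenever $|m_t|<1$, which happens on two time intervals in every period along $\msc C_\al$. So there is no pointwise-in-time contraction. If $m_t$ stayed in $\{|m|<1\}$ the variance would grow, which is exactly the splitting you worry about, so the single-particle double-well picture gives no uniform control. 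Closeness of double-well trajectories over times of order one cannot give a bound uniform in time either. Contraction exists only on average over a period, and only because the coupling drives $m_t$ around $\msc C_\al$.

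Making this rigorous is the heart of the paper (Step~4 in the proof of Theorem~\ref{t.bootstrap}), and it needs ingredients absent from your plan. First, a quantitative averaging fact: along $\msc C_\al$ one has $\frac1{T_\al}\int_0^{T_\al}m_t^2\,\d t\ge\frac32$ and $\int_0^{T_\al}m_t\,\d t=0$. These follow from the invariance of the set of solutions under $(m,n)\mapsto(-n,m)$, uniqueness of the cycle, and $\msc C_\al\subset\mcl A$, and are then transferred to the actual means using a per-period bound $|p_t-\bar p_t|\le C\al^{1-k}$. (The relevant coefficient is $1-m_t^2$, not $1-X^2$.) Second, the third-moment term: a priori moment bounds only give $|\E[\td X_t^3]|\le C_\eta\,\E[\td X_t^2]^{1-\eta}$. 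This is much larger than the variance when the variance is small, so pointwise in time the term can swamp the averaged linear gain. The paper neutralizes it by combining $|\int_{t_0}^{t_0+T_\al}m_t\,\d t|\le CT_\al\al^{1-k}$ with the slow variation of $\E[\td X_t^3]$ over the short period. This dictates the scales: variance $\le\al^{-k}$ and distance $\le\al^{1-k}$ with $k>\frac32$, and $\ep$ small depending on $\al$. Then, at the threshold, the gain $\frac12T_\al\al^{-k}$, of order $\al^{-1-k}$, beats the errors of order $\al^{\frac12-2k}$ and $\al^{-\frac32-k}$. In particular, your proposed invariant set (variance $\le C\ep$, distance $\le\delta/2$) would not close with this mechanism. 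With the mean at a fixed distance of order $\delta$ from the cycle, $\frac1{T_\al}\int m_t\,\d t$ can be of order $\delta$, and the cross term $\E[\td X_t^3]\int m_t\,\d t$ can no longer be bounded by a small multiple of the gain. Moreover, a variance of order $\ep$ is more than this mechanism delivers, and more than the theorem needs. Finally, the transversal contraction is only sketched. The forcing on the mean is of order $\E[\td X_t^2]^{1-\eta}$ rather than $O(\mathrm{Var})$, and you need a contraction rate uniform in $\al$. The paper obtains this from the Lyapunov function $g=\dist^2(\cdot,\msc C_\al)$ via the bound $(m^2-1)F_{\al,2}^2+(n^2-1)F_{\al,1}^2\ge\frac32\al^2-C\al$ on $\mcl A$; your polar averaging is a plausible alternative, but it has to be carried out. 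The two controls must then be run as a joint induction over periods, since each uses the other.
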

%
We have stated Theorem~\ref{t.main} with the assumption that $(X_0,Y_0)$ is deterministic and belongs to $\msc C_\al$. This is only for the sake of clarity of exposition. The more general Theorem~\ref{t.bootstrap} below allows for random initial conditions, provided that they are sufficiently concentrated and with a mean sufficiently close to~$\msc C_\al$. Theorem~\ref{t.bootstrap} assumes a deterministic upper bound on $X_0^2 + Y_0^2$, but this itself can be weakened, as the proof will hopefully make clear.
%

The proof proceeds in two main steps. The first is a careful analysis of the deterministic dynamics, carried out in Section~\ref{s.limitcycle}, for which we establish the existence of a unique limit cycle via the Poincaré–Bendixson theorem and construct a Lyapunov function certifying its exponential stability (Propositions~\ref{p.limitcycle} and \ref{p.lyapunov}). The second step, in Section~\ref{s.toy}, is an inductive argument (Theorem~\ref{t.bootstrap}) showing that the stochastic fluctuations introduced by the noise remain small over each period of the cycle, so that they cannot accumulate and drive the system away from the periodic orbit. Finally, experiments illustrating the result are presented in Section~\ref{s.experiments}.

\section{Analysis of the deterministic dynamics}
\label{s.limitcycle}

In this section, we focus on the analysis of the deterministic dynamics, where we set $\ep = 0$ and we impose the initial measures $\mu_0$ and $\nu_0$ to be Dirac masses. In this case, the evolution \eqref{e.SDEexplicit} becomes deterministic and reduces to the differential system
\begin{equation}
\label{e.particleODE}
\dr_t m = -\al n + m - \frac{m^3}{3}, \qquad \dr_t n = \al m + n - \frac{n^3}{3}.
\end{equation}
We let $F_\al : \R^2 \to \R^2$ denote the vector field driving this system of equations, 
\begin{equation}  
\label{e.def.F}
F_\al (m,n) := \Ll(-\al n+m-\frac{m^3}{3}, \al m + n - \frac{n^3}{3}\Rr).
\end{equation}
We recall that the annulus $\mcl A$ is defined in \eqref{e.def.mclA}.
\begin{proposition}[limit cycle]
\label{p.limitcycle}
The system~\eqref{e.particleODE} admits a unique non-constant periodic orbit, which we denote by $\msc C_\al$. This orbit circles around the origin and is contained in $\mcl A$. It is a limit cycle of every trajectory of the system~\eqref{e.particleODE} except the one that stays put at the origin. The smallest period $T_\al$ of the motion along this cycle satisfies 
\begin{equation}  
\label{e.limitcycle.period}
\frac{4\pi}{2\al + 1} \le T_\al \le \frac{4\pi}{2\al - 1}.
\end{equation}
\end{proposition}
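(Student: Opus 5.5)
The plan is to pass to polar-type quantities. Write $r = m^2+n^2$ and let $\theta$ be the angle of $(m,n)$. Along \eqref{e.particleODE}, the $\al$-terms cancel in the radial derivative:
\begin{equation*}
\tfrac12 \dr_t r = m^2 + n^2 - \tfrac{m^4+n^4}{3}.
\end{equation*}
Since $\tfrac{r^2}{2} \le m^4 + n^4 \le r^2$, this gives $r - \tfrac{r^2}{3} \le \tfrac12\dr_t r \le r - \tfrac{r^2}{6}$. Hence $\dr_t r > 0$ when $0<r<3$ and $\dr_t r<0$ when $r>6$, so $\mcl A$ is positively invariant. For the angular part we compute
\begin{equation*}
m\,\dr_t n - n\,\dr_t m = \al r + \tfrac{mn(m^2-n^2)}{3} = \al r + \tfrac{r^2 \sin(4\theta)}{12},
\end{equation*}
so $\dr_t \theta = \al + \tfrac{r\sin(4\theta)}{12}$. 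On $\mcl A$ (where $r\le 6$) this lies in $[\al - \tfrac12, \al+\tfrac12]$, which is positive for $\al \ge 1$. Three consequences follow. First, the only equilibrium is the origin: at an equilibrium, $\dr_t r = 0$ forces $r=0$ or $r\in[3,6]$, and in the latter case $\dr_t\theta>0$. Second, every orbit in $\mcl A$ turns monotonically around the origin. Third, once a periodic orbit in $\mcl A$ is known to wind exactly once, its period satisfies $\tfrac{2\pi}{\al+1/2} \le T_\al \le \tfrac{2\pi}{\al-1/2}$, which is \eqref{e.limitcycle.period}.

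For existence, I would apply Poincaré–Bendixson to the compact positively invariant set $\mcl A$, which contains no equilibrium. The $\omega$-limit set of any trajectory starting in $\mcl A$ is therefore a periodic orbit. This orbit surrounds the origin, because $\theta$ increases monotonically along it, and it winds exactly once, since a closed simple curve traversed with monotone angle winds once. Next, every non-constant periodic orbit lies in $\mcl A$. Indeed, if an orbit had a point with $r<3$, then $r$ would increase strictly as long as it stays below $3$, so after decreasing once it could never come back; this contradicts periodicity, and the same argument applies symmetrically when $r>6$.

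Uniqueness is the step I expect to be the main one, and I would handle it with a Bendixson–Dulac argument on an annulus. The divergence of $F_\al$ is $2 - m^2 - n^2 = 2 - r$, which is strictly negative on $\mcl A$. Suppose there were two distinct periodic orbits. Both lie in $\mcl A$, both encircle the origin, and they are disjoint simple closed curves, so they are nested and bound an annular region $R\subset\mcl A$. The flux of $F_\al$ through each boundary curve vanishes, because $F_\al$ is tangent to the orbits. By the divergence theorem, $\int_R \operatorname{div}F_\al = 0$, which contradicts $\operatorname{div}F_\al<0$ on $R$. (As a byproduct, $\int_0^{T_\al}\operatorname{div}F_\al <0$ along $\msc C_\al$, so the cycle is hyperbolic and attracting.)

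Finally, I would show that $\msc C_\al$ is the limit cycle of every non-origin trajectory. Starting from $r_0>0$, the bound $\tfrac12\dr_t r\ge r(1-r/3)$ shows that $r$ reaches the region $[3-\eta,\infty)$ in finite time for any $\eta>0$. In fact the trajectory enters a slightly enlarged annulus $\{3-\eta\le r\le 6+\eta\}$. This enlarged annulus is still free of equilibria and still has negative divergence, and the same radial argument shows that it is trapping from both sides. The $\omega$-limit set of the trajectory is then a periodic orbit by Poincaré–Bendixson, and by uniqueness it equals $\msc C_\al$.
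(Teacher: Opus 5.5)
Your argument is correct, and it departs from the paper's mainly in the uniqueness step.

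\textbf{Uniqueness.} The paper combines $\nabla\cdot F_\alpha = 2-(m^2+n^2)<0$ on $\mathcal A$ with the stability criterion for periodic orbits (Perko, Thm.~3.4.2), which makes every cycle in $\mathcal A$ attracting. It then argues on the Poincar\'e return map that two attracting cycles would force an unstable one between them. You use the same sign information in a Bendixson--Dulac flux argument instead. Two distinct periodic orbits surrounding the origin are nested Jordan curves bounding an annulus $R\subset\mathcal A$. The flux of $F_\alpha$ through $\partial R$ vanishes, while $\int_R\nabla\cdot F_\alpha<0$, which is a contradiction.

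Your route is more self-contained: it needs no return map and no ordering of fixed points. It also excludes a second periodic orbit directly, whether or not that orbit is a limit cycle. The paper's route builds the stability of the cycle into the proof, whereas you only note it as a byproduct.

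\textbf{Other differences.} These are cosmetic. You use positive invariance of $\mathcal A$ and an enlarged trapping annulus to locate $\omega$-limit sets, where the paper uses $\liminf r^2\ge 3$ and $\limsup r^2\le 6$. You also prove separately that every nonconstant periodic orbit lies in $\mathcal A$. The paper gets this implicitly, since along a periodic orbit the liminf and limsup are the minimum and maximum.

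\textbf{Two small points of exposition.} First, the sentence ``after decreasing once it could never come back'' is garbled. The clean version: at a minimum of $m^2+n^2$ along a periodic orbit its time derivative vanishes, which is impossible if that minimum lies in $(0,3)$. The same holds for a maximum above $6$.

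Second, the claim that a simple closed curve with strictly increasing angle winds exactly once uses the fact that a Jordan curve has winding number $0$ or $\pm1$ about any point off it. This is the same topological input as in the paper's period argument, so the step is fine, but it deserves to be stated.
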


\begin{proof}
We set $r := \sqrt{m^2 + n^2}$. For $(m,n)$ solving \eqref{e.particleODE}, we can write
\begin{equation}  
\label{e.dr.r}
\dr_t (r^2)  = 2(m \, \dr_t{m} + n \, \dr_t{n}) = 2r^2 - \frac{2}{3}(m^4 + n^4).
\end{equation}
Since $\frac {r^4}{2} \le m^4 + n^4 \le r^4$, we have
\begin{equation*}  
2r^2  \Ll( 1 - \frac {r^2} 3 \Rr) \le \dr_t (r^2) \le r^2  \Ll( 2 - \frac {r^2} 3 \Rr) .
\end{equation*}
Hence, either $m(0) = n(0) = 0$ and the trajectory stays put at the origin, or $\liminf_{t \to +\infty} r^2 \ge 3$ and $\limsup_{t \to +\infty} r^2 \le 6$. Since the trajectory either stays put at the origin or never visits it, we can define an angular variable $\theta$ such that $m = r \cos \theta$ and $n = r \sin \theta$, and we have
\begin{equation}  
\label{e.drt.theta}
\dr_t \theta = \frac{m \dr_t n - n \dr_t m}{r^2} = \al + \frac{1}{3r^2} mn(m^2 - n^2) = \al + \frac{r^2}{12} \sin(4\theta),
\end{equation}
where we used $mn = \tfrac 1 2 r^2 \sin (2\theta)$ and $m^2 - n^2 =  r^2 \cos(2\theta)$ to derive the last identity. Since $\al \ge 1$ and $r^2 \le 6$ in the annulus $\mcl A$, we see that $\dr_t \theta$ never vanishes in $\mcl A$, and in particular the vector field $F_\al$ never vanishes in $\mcl A$. (We can also note that the system \eqref{e.particleODE} turns anticlockwise in $\mcl A$.) Using this, the previous observation on the liminf and limsup of $r^2$, and the Poincar\'e-Bendixson theorem \cite[Theorem 3.7.1]{perko2001differential}, we deduce that the system \eqref{e.particleODE} admits a limit cycle in~$\mcl A$. The divergence of the vector field $F_\al$ is $\nabla \cdot F_\al(m,n) = 2-r^2$, which is strictly negative everywhere in $\mcl A$. It thus follows from \cite[Theorem~3.4.2]{perko2001differential} that every limit cycle in $\mcl A$ is stable. If there were two distinct cycles in $\mcl A$, then there would need to be an unstable cycle in-between (since the Poincaré return map \cite[Section~3.4]{perko2001differential} cannot have two stable fixed points without having an unstable fixed point in-between), which we ruled out. Hence there is exactly one limit cycle for the system \eqref{e.particleODE}, which we denote by $\msc C_\al$. The fact that every trajectory except the one staying put at the origin is attracted to $\msc C_\al$ is again a consequence of the Poincar\'e-Bendixson theorem \cite[Theorem 3.7.1]{perko2001differential}. Recalling that $\dr_t \theta$ is bounded away from zero, we see that $\msc C_\al$ circles around the origin. It remains to see \eqref{e.limitcycle.period}. Let $(m_0, n_0)$ be a point on $\msc C_\al$, let $(m_t, n_t)_{t \ge 0}$ be the solution to \eqref{e.particleODE}, with $(\theta_t, r_t)_{t \ge 0}$ the associated polar coordinates, and let $T_\al > 0$ be the minimal period of this motion (recall that $F_\al$ never vanishes along $\msc C_\al$). By the Jordan curve theorem, the set $\R^2 \setminus \msc C_\al$ contains exactly two connected components. On the unbounded component, the winding number of $\msc C_\al$ is null. It jumps by $\pm 1$ unit upon crossing $\msc C_\al$, and the winding number of $\msc C_\al$ at the origin is $(\theta_{T_{\al}} - \theta_0)/(2\pi)$. By \eqref{e.drt.theta} and the fact that $\msc C_\al \subset \mcl A$, we have that $|\dr_t \theta - \al| \le \frac 1 2$, and since $\al \ge 1$, the winding number at the origin is $1$ and
\begin{equation*}  
T_\al \Ll( \al - \frac 1 2 \Rr) \le {\theta_{T_{\al}} - \theta_0} = 2\pi \le T_\al \Ll( \al + \frac 1 2 \Rr) .
\end{equation*}
This yields \eqref{e.limitcycle.period} and thus completes the proof.
\end{proof}
A key aspect we will exploit later is that $T_\al$ can be made arbitrarily small by sending $\al$ to infinity, while at the same time keeping the rough location of the cycle $\msc C_\al$ unchanged, in the sense that it always stays within the fixed annulus $\mcl A$. Although we will not need this, we mention that one can in fact describe the asymptotic shape of $\msc C_\al$ more precisely in the limit of large $\al$. Indeed, notice from \eqref{e.limitcycle.period} that $\dr_t (r^2)$ remains bounded uniformly over $\al \ge 1$ and $(m,n) \in \mcl A$. Hence, as $\al$ tends to infinity, the radius $r$ along the cycle~$\msc C_\al$ cannot change by more than $C/\al$ (since $T_\al \le 4\pi/\al$). In other words, the cycle~$\msc C_\al$ asymptotically becomes a circle as $\al$ tends to infinity. Moreover, we have that
\begin{equation*}  
0 =\int_0^{T_\al} \dr_t(r_t^2) \, \d t = \int_0^{T_\al} \Ll( 2 r_t^2 - \frac {2r^4} 3 (\cos^4 \theta_t + \sin^4 \theta_t)  \Rr) \, \d t .
\end{equation*}
Since for large $\alpha$, the radius $r$ is close to a constant $R_\al$ and $\dr_t \theta$ is close to the constant $\al$, and using also that $\cos^4$ and $\sin^4$ average to $\frac 3 8$, we get from the previous display that $R_\al^2 \simeq R_\al^4/4$, that is, $R_\al \simeq 2$. In other words, the cycle $\msc C_\al$ tends to the circle of radius $2$ centered at the origin as $\al$ tends to infinity.

Using symmetry, we record information on the first and second moments of $m$ and $n$ along the cycle $\msc C_\al$. 
\begin{proposition}[square average]
\label{p.square} Consider the system \eqref{e.particleODE} initialized at a point in the cycle $\msc C_\al$, and recall that we denote by $T_\al$ the minimal period of the resulting trajectory. We have
\begin{equation}
\label{e.mean}
\frac 1 {T_\al} \int_0^{T_\al} m_t \, \d t = \frac 1 {T_\al} \int_0^{T_\al} n_t \, \d t = 0
\end{equation}
and
\begin{equation}  
\label{e.square}
\frac 1 {T_\al} \int_0^{T_\al} m_t^2 \, \d t = \frac 1 {T_\al} \int_0^{T_\al} n_t^2 \, \d t \ge \frac 3 2.
\end{equation}
\end{proposition}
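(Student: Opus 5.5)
The plan is to exploit two symmetries of the vector field $F_\al$: the rotation by $\pi/2$ and the central symmetry $(m,n)\mapsto(-m,-n)$. Together with the uniqueness of the limit cycle from Proposition~\ref{p.limitcycle}, they force the cycle to be invariant under these maps. We can then read off the time averages by a time shift along the periodic orbit.

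\emph{Step 1: symmetries.} Let $R(m,n) := (-n,m)$. We check directly that $F_\al(R(m,n)) = R\,F_\al(m,n)$. Indeed, the first component of $F_\al(-n,m)$ is $-\al m - n + n^3/3$, which equals $-(\al m + n - n^3/3)$. The second component is $-\al n + m - m^3/3$, which is the first component of $F_\al(m,n)$. Hence if $(m_t,n_t)$ solves \eqref{e.particleODE}, so does $(-n_t,m_t)$. The image $R\msc C_\al$ is therefore a non-constant periodic orbit, and by the uniqueness in Proposition~\ref{p.limitcycle} we get $R\msc C_\al = \msc C_\al$.

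\emph{Step 2: time shift.} Fix the initial point $p_0 = (m_0,n_0) \in \msc C_\al$. Then $Rp_0 \in \msc C_\al$, so $Rp_0 = (m_s,n_s)$ for some $s \in [0,T_\al)$. By uniqueness of solutions, $(-n_t,m_t) = (m_{t+s},n_{t+s})$ for all $t$. Integrating over a full period and using periodicity, we obtain
\[
\int_0^{T_\al} m_t\,\d t = -\int_0^{T_\al} n_t\,\d t, \qquad \int_0^{T_\al} n_t\,\d t = \int_0^{T_\al} m_t\,\d t .
\]
These two identities force both means to vanish, which gives \eqref{e.mean}. The same shift applied to squares gives $\int_0^{T_\al} m_t^2\,\d t = \int_0^{T_\al} n_t^2\,\d t$. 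So only the lower bound in \eqref{e.square} remains.

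\emph{Step 3: lower bound.} By Step 2, $\frac1{T_\al}\int_0^{T_\al} m_t^2\,\d t = \frac1{2T_\al}\int_0^{T_\al} r_t^2\,\d t$. Since $\msc C_\al \subset \mcl A$, we have $r_t^2 \ge 3$ along the cycle. Hence the average is at least $3/2$.

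This route avoids any quantitative analysis of the orbit. The only point needing care is Step 2's use of uniqueness to identify $(-n_t,m_t)$ with a time translate of the orbit, and I expect no real obstacle there.
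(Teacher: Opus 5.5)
Your proof is correct and follows the paper's argument: the rotation $(m,n)\mapsto(-n,m)$ maps solutions to solutions, and uniqueness of $\msc C_\al$ gives the vanishing means and the equal square averages. The lower bound then comes from $\msc C_\al \subset \mcl A$; you additionally spell out the time-shift identification $(-n_t,m_t)=(m_{t+s},n_{t+s})$, which the paper leaves implicit.
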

\begin{proof}
We first observe that the function $t \mapsto (-n_t, m_t)$ is also a solution to~\eqref{e.particleODE}. By uniqueness of the cycle $\msc C_\al$, we deduce \eqref{e.mean} and the equality in~\eqref{e.square}. Since the cycle $\msc C_\al$ is contained in the annulus $\mcl A$, we have $m^2 + n^2 \ge 3$ everywhere in $\msc C_\al$, and this completes the proof of the proposition.
\end{proof}
\begin{remark}  
As per our previous discussion on the fact that $\msc C_\al$ tends to the circle of radius $2$ as $\al$ tends to infinity, one could show that $\frac 1 {T_\al} \int_0^{T_\al} m_t^2 \, \d t$ and $\frac 1 {T_\al} \int_0^{T_\al} n_t^2 \, \d t$ tend to $2$ as $\al$ tends to infinity.
\end{remark}

We can strengthen the statement of stability of the cycle $\msc C_\al$ as follows. Recall the notation for the vector field $F_\al$ in \eqref{e.def.F}. 
\begin{proposition}[Lyapunov function]
\label{p.lyapunov}
Let $g(z) := \dist^2(z,\msc C_\al)$ denote the squared distance between $z \in \R^2$ and the limit cycle $\msc C_\al$. There exist $\delta, c > 0$ such that for every $\al$ sufficiently large and $z \in \R^2$, we have
\begin{equation*}  
g(z) \le \de  \quad \implies \quad (F_\al \cdot \nabla g)(z) \le -c  g(z).
\end{equation*}
\end{proposition}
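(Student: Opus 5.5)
The idea is to exploit the splitting $F_\al(z) = \al J z + G(z)$, where $J(m,n) := (-n,m)$ is the rotation by $\pi/2$ and $G(m,n) := (m - m^3/3,\, n - n^3/3)$ does not depend on $\al$. Since $J$ is linear and antisymmetric, the large term $\al Jz$ should not contribute to the normal contraction rate. The stability should therefore come from $G$ alone, evaluated near the circle of radius $2$.

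\textbf{Step 1: the cycle is nearly a circle of radius $2$.} I will make the heuristic discussion after Proposition~\ref{p.limitcycle} quantitative, and show that there is $C$ independent of $\al$ such that $|r - 2| \le C/\al$ along $\msc C_\al$. The radius changes by at most $C T_\al \le C/\al$ over one period, because $\dr_t(r^2)$ is bounded on $\mcl A$. Moreover $\dr_t \theta = \al + O(1)$ by \eqref{e.drt.theta}, so time averages over a period of functions of $\theta$ agree with angular averages up to $O(1/\al)$. The identity $0 = \int_0^{T_\al} \dr_t(r^2)\,\d t$ then gives $R^2 = R^4/4 + O(1/\al)$ for an $R$ with $R^2 \in [3,6]$, hence $R = 2 + O(1/\al)$.

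\textbf{Step 2: geometry of the cycle, uniformly in $\al$.} The unit tangent to $\msc C_\al$ at a point $p$ is $F_\al(p)/|F_\al(p)|$. Since $|G| = O(1)$ on $\mcl A$ while $|\al Jp| \ge \sqrt 3\,\al$, this tangent lies within $O(1/\al)$ of $Jp/|p|$. Hence the unit normal $\nu(p)$ lies within $O(1/\al)$ of the radial direction $p/|p|$. For the curvature I use $\kappa = |F \times (DF\,F)|/|F|^3$. In $DF_\al F_\al = \al^2 J^2 p + O(\al)$ the leading part dominates, which gives $\kappa = 1/|p| + O(1/\al) \le 1$ for $\al$ large. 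A uniform curvature bound alone does not give a uniform tubular neighbourhood. I also need that distant arcs of $\msc C_\al$ stay apart, and this follows since the curve is a radial graph $r = r(\theta)$ with $r$ within $O(1/\al)$ of $2$. Consequently there is $\de_0 > 0$ independent of $\al$ such that, on $\{g \le \de_0\}$, the nearest-point projection $\pi(z)$ is unique and smooth, $g$ is $C^1$, and
\begin{equation*}
\nabla g(z) = 2(z - \pi(z)), \qquad z - \pi(z) = s\,\nu(\pi(z)), \quad |s| = \sqrt{g(z)}.
\end{equation*}

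\textbf{Step 3: the key computation.} Write $p = \pi(z)$ and $\nu = \nu(p)$. Then
\begin{equation*}
(F_\al \cdot \nabla g)(z) = 2 s\, F_\al(p)\cdot \nu + 2s\,\bigl(F_\al(z) - F_\al(p)\bigr)\cdot \nu .
\end{equation*}
The first term vanishes because $F_\al(p)$ is tangent to the cycle.

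For the second term, the $\al J$ part is linear, so $F_\al(z) - F_\al(p) = \al s J\nu + G(z) - G(p)$. The contribution of $\al sJ\nu$ vanishes because $J\nu \cdot \nu = 0$. Expanding $G$ to second order, with $D^2G$ bounded on a fixed neighbourhood of $\mcl A$ independently of $\al$, gives
\begin{equation*}
(F_\al \cdot \nabla g)(z) = 2 s^2\, \nu \cdot DG(p)\nu + O(|s|^3).
\end{equation*}
Here $DG(p) = \mathrm{diag}(1 - m^2,\, 1 - n^2)$ at $p = (m,n)$. Replacing $\nu$ by $p/|p|$ and $|p|^2$ by $4$, both at cost $O(1/\al)$, yields
\begin{equation*}
\nu \cdot DG(p)\nu = 1 - \frac{m^4 + n^4}{r^2} + O(1/\al) \le 1 - \frac{r^2}{2} + O(1/\al) = -1 + O(1/\al).
\end{equation*}
Therefore $(F_\al \cdot \nabla g)(z) \le -2g + O(1/\al)\,g + C g^{3/2}$. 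Choosing $\de \le \de_0$ small and $\al$ large gives the claim with $c = 1$.

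\textbf{Main obstacle.} I expect Step 2 to be the main difficulty: it needs $\de$ independent of $\al$. The cycle is only described through the dynamics, so curvature and injectivity-radius bounds must be extracted uniformly in $\al$. The plan is to use the radial-graph representation from Step 1 together with the explicit expression of the curvature in terms of $F_\al$ and $DF_\al$.
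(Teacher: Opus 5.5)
Your proposal is correct and is essentially the paper's argument. The paper also uses $\nabla g = 2(z-\pi(z))$ to kill the tangential term and notes that only the symmetric part $\mathrm{diag}(1-m^2,1-n^2)$ of $\nabla F_\al$ matters. It then evaluates this form on the exact normal $(F_{\al,2},-F_{\al,1})/|F_\al|$, whose leading order in $\al$ is your radial direction, and obtains the same quantity $m^4+n^4-m^2-n^2>0$ on $\mcl A$.

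Two differences are worth noting. The identification $R=2$ in your Step~1 is unnecessary: $r^2\ge 3$ on $\mcl A$ already gives $1-r^2/2\le -\frac12$, and Step~2 only needs the $O(1/\al)$ oscillation of $r$, not its value. Your Step~2, on the other hand, usefully supplies the uniform-in-$\al$ tubular neighbourhood that the paper takes for granted.
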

\begin{proof}
Since $\msc C_\al$ is a smooth simple closed curve, we can find a neighborhood $U$ of $\msc C_\al$ in which the nearest-point projection
$\pi : U \to \msc C_\al$ is well-defined and smooth, so that for every $z \in U$, we have
\begin{equation*}  
g(z) = \inf_{z' \in \msc C_\al} |z-z'|^2 = |z - \pi(z)|^2.
\end{equation*} 
The optimality condition yields that $z-\pi(z)$ is orthogonal to the tangent to~$\msc C_\al$ at $\pi(z)$, which is oriented along $F_\al(\pi(z))$, that is, we have 
\begin{equation}
\label{e.ortho.proj}
(z-\pi(z)) \cdot F_\al(\pi(z)) = 0.
\end{equation}
By the envelope theorem \cite[Theorem~2.21]{HJbook}, we also see that 
\begin{equation}\label{e.gradg}
\nabla g(z) = 2\Ll(z - \pi(z)\Rr),
\end{equation}
and thus
\begin{equation*}  
F_\al \cdot \nabla g(z) = 2\Ll(z - \pi(z)\Rr) \cdot F_\al(z).
\end{equation*}
Using \eqref{e.ortho.proj}, we can rewrite this as
\begin{align*}  
F_\al \cdot \nabla g(z)  
& = 2\Ll(z - \pi(z)\Rr) \cdot \Ll(F_\al(z) - F_\al(\pi(z))\Rr) \\
& = 2\Ll(z - \pi(z)\Rr) \cdot \nabla F_\al(\pi(z))\Ll(z - \pi(z)\Rr) + O\Ll(|z - \pi(z)|^3\Rr),
\end{align*}
where $\nabla F_\al(u)$ denotes the Jacobian of $F_\al$ at $u \in \R^2$, and where we used that the second derivatives of $F_\al$ do not depend on $\al$. Denoting $\nu(z) := \frac{z - \pi(z)}{|z - \pi(z)|}$ for a unit normal to $\msc C_\al$ at $\pi(z)$, we can rewrite this as
\begin{equation}
\label{e.Lg.expansion}
F_\al \cdot \nabla g(z) = g(z)\Ll[2 \, \nu(z) \cdot \nabla F_\al(\pi(z))\nu(z) + O\Ll(\sqrt{g(z)}\Rr)\Rr].
\end{equation}
Recalling that $\nu(z)$ is orthogonal to $F_\al(\pi(z)) = (F_{\al,1}(\pi(z)), F_{\al,2}(\pi(z)))$, and that the choice of orientation of $\nu(z)$ is immaterial, we can take $\nu(z) = (F_{\al,2}(\pi(z)), -F_{\al,1}(\pi(z)))/|F_\al|$ (recall from Proposition~\ref{p.limitcycle} that $F_\al$ only vanishes at the origin). In view of \eqref{e.Lg.expansion}, the proof of Proposition~\ref{p.lyapunov} will be complete provided that we can show that, for $\al$ sufficiently large,
\begin{equation}
\label{e.contract.bound}
\sup_{z \in \mcl A}
\begin{pmatrix}  
F_{\al,2}(z) \\
-F_{\al,1}(z)
\end{pmatrix}
\cdot \nabla F_\al(z)
\begin{pmatrix}  
F_{\al,2}(z) \\
-F_{\al,1}(z)
\end{pmatrix} < 0.
\end{equation}
We have
\[
\nabla F_\al(m,n) = 
\begin{pmatrix} 
1 - m^2 & -\al \\ 
\al & 1 - n^2 
\end{pmatrix}.
\]
Noting that the statement \eqref{e.contract.bound} only concerns the symmetric part of $\nabla F_\al$, we can rewrite it as
\begin{equation}  
\label{e.inf.pos}
\inf \Ll\{ (m^2 - 1) F_{\al,2}(m,n)^2 + (n^2 - 1) F_{\al,1}(m,n)^2  \ : \ m^2 + n^2 \in [3,6]\Rr\}  > 0.
\end{equation}
We now proceed to prove \eqref{e.inf.pos}.
We have
\begin{multline}
\label{e.expand.alpha}
(m^2 - 1) F_{\al,2}(m,n)^2 + (n^2 - 1) F_{\al,1}(m,n)^2
\\
= \al^2 (m^4 + n^4 - m^2 - n^2) + R_\al(m,n),
\end{multline}
where
\begin{equation*}
R_\al(m,n) := \frac{4\al}{3} mn(m^2-n^2) + (m^2-1)\Ll(n - \frac{n^3}{3}\Rr)^2 + (n^2-1)\Ll(m - \frac{m^3}{3}\Rr)^2.
\end{equation*}
We can find a constant $C < +\infty$ such that for every $(m,n)$ with $m^2 + n^2 \le 6$, we have $|R_\al(m,n)| \le C\al$. 
We now bound the leading-order term from below. Since $m^4 + n^4 \ge \frac 1 2 (m^2 + n^2)^2$, we have that
\begin{equation*}  
m^4 + n^4 - m^2 - n^2 \ge \frac 1 2 r^4 - r^2 = \frac 1 2 r^2(r^2-2). 
\end{equation*}
Since $r^2 \ge 3$ in the annulus $\mcl A$, this quantity is bounded from below by $3/2$. Returning to \eqref{e.expand.alpha}, we thus obtain that for every $(m,n) \in \mcl A$,
\begin{equation*}
(m^2 - 1) F_{\al,2}(m,n)^2 + (n^2 - 1) F_{\al,1}(m,n)^2 \ge \frac{3\al^2}{2} - C\al.
\end{equation*}
For $\al$ sufficiently large, the right-hand side is strictly positive. This completes the proof of \eqref{e.inf.pos}, and therefore of the proposition. 
\end{proof}

%
%
%
%
%
%

\section{Mean-field dynamics}
\label{s.toy}

We now come back to the study of the mean-field SDE in \eqref{e.SDEexplicit}, with the notation $m_t = \E[X_t]$ and $n_t = \E[Y_t]$; in other words we now consider the case of $\ep > 0$. An important ingredient of our analysis, extending the fact that the deterministic dynamics studied in the previous section remains in a fixed bounded set uniformly over $\al \ge 1$, is that $(X_t, Y_t)$ satisfy moment bounds that are uniform in $\al$. We start with estimates on the second moments.
\begin{proposition}[Second moment bound]
\label{p.2moment.control}
For every $\alpha \ge 0$, $\ep \in (0,1]$ and $t \ge 0$, we have
\begin{equation}  
\label{e.2moment.control}
\E[X_t^2 + Y_t^2] \le \max \Ll( 8, \E[X_0^2 + Y_0^2] \Rr) .
\end{equation}
In particular, if $\E[X_0^2 + Y_0^2] \le 8$, then $|m_t|^2 + |n_t|^2 \le 8$ for every $t \ge 0$.
\end{proposition}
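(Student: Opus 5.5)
We apply Itô's formula to $X_t^2 + Y_t^2$ using \eqref{e.SDEexplicit}, take expectations, and derive a differential inequality for $\phi(t) := \E[X_t^2 + Y_t^2]$ that forces $\phi$ to decrease whenever it exceeds $8$. Itô gives
\begin{equation*}
\frac{\d}{\d t}\phi(t) = 2\E\Ll[X_t\Ll(-\al n_t + X_t - \tfrac{X_t^3}{3}\Rr)\Rr] + 2\E\Ll[Y_t\Ll(\al m_t + Y_t - \tfrac{Y_t^3}{3}\Rr)\Rr] + 4\ep .
\end{equation*}
The key observation is that the coupling terms cancel exactly: $-2\al n_t\E[X_t] + 2\al m_t \E[Y_t] = -2\al n_t m_t + 2\al m_t n_t = 0$. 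This is the mean-field analogue of the cancellation in \eqref{e.dr.r}, and it is what makes the bound uniform in $\al$. We are left with
\begin{equation*}
\frac{\d}{\d t}\phi(t) = 2\phi(t) - \frac 2 3 \E[X_t^4 + Y_t^4] + 4\ep .
\end{equation*}

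Next we bound the quartic term from below by $\phi$. By Jensen, $\E[X_t^4] \ge \E[X_t^2]^2$, and similarly for $Y_t$. Combined with $a^2 + b^2 \ge \frac12 (a+b)^2$, this gives $\E[X_t^4 + Y_t^4] \ge \frac12 \phi^2$. Using $\ep \le 1$, we obtain
\begin{equation*}
\phi' \le 2\phi - \frac13 \phi^2 + 4 = -\frac13\Ll(\phi^2 - 6\phi - 12\Rr).
\end{equation*}
The right-hand side is negative as soon as $\phi > 3 + \sqrt{21} \approx 7.58$, and in particular whenever $\phi > 8$. A standard comparison argument then yields $\phi(t) \le \max(8, \phi(0))$: if $\phi$ ever exceeded this level, consider the first time it crosses a slightly higher level and get a contradiction with $\phi' < 0$ there. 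The consequence $m_t^2 + n_t^2 \le 8$ follows from Jensen, since $m_t^2 \le \E[X_t^2]$ and $n_t^2 \le \E[Y_t^2]$.

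The main obstacle is technical rather than conceptual: justifying that the stochastic integrals are true martingales, so that taking expectations is legitimate, and that $\phi$ is differentiable (or at least absolutely continuous with the stated derivative). We would handle this by localization. Apply Itô up to the exit time $\tau_R$ of a ball of radius $R$, then let $R \to \infty$ using the finite moments provided by the well-posedness results \cite{dosreis2019freidlin, hong2024mckean}. Alternatively, we can work with the integrated form of the identity, which suffices for the comparison argument. We may assume $\phi(0) < \infty$, since otherwise there is nothing to prove.
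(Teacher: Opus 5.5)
Your proposal is correct and matches the paper's proof. The steps are the same: Itô's formula with exact cancellation of the $\pm\al m_t n_t$ coupling terms, the bound $\E[X_t^4]+\E[Y_t^4]\ge \frac12\phi(t)^2$ via Jensen, the inequality $\phi'\le 2\phi-\frac13\phi^2+4\ep$ (negative once $\phi\ge 8$), a comparison argument, and Jensen/Cauchy--Schwarz for the means; your localization remarks make explicit technical points the paper leaves implicit.
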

\begin{proof}
By It\^o's formula, we have
\begin{equation*}  
\frac 1 2 \dr_t \E[X_t^2] = -\alpha m_t n_t + \E[X_t^2] - \frac {\E[X_t^4]}{3} + \ep,
\end{equation*}
\begin{equation*}  
\frac 1 2 \dr_t \E[Y_t^2] = \alpha m_t n_t + \E[Y_t^2] - \frac {\E[Y_t^4]}{3} + \ep.
\end{equation*}
Denoting $\phi(t) := \E[X_t^2] + \E[Y_t^2]$, and using that
\begin{equation*}  
\E[X_t^4] + \E[Y_t^4] \ge \E[X_t^2]^2 + \E[Y_t^2]^2 \ge \frac 1 2 \phi(t)^2,
\end{equation*}
we obtain that 
\begin{equation*}  
\frac 1 2 \dr_t \phi(t) \le \phi(t) - \frac 1 6 \phi(t)^2 + 2\ep.
\end{equation*}
The quantity on the right side is negative whenever $\phi(t) \ge 8$, so \eqref{e.2moment.control} follows by a comparison argument. The last part of the proposition statement is obtained using the Cauchy-Schwarz inequality.
\end{proof}
We define
\begin{equation}
\label{e.def.tdX.tdY}
\td X_t := X_t - m_t, \qquad \td Y_t := Y_t - n_t.
\end{equation}
Noting that 
\begin{equation}  
\label{e.ev.mn}
\dr_t m_t = -\alpha n_t + m_t - \frac{\E[X_t^3]}{3}, \qquad \dr_t n_t = \alpha m_t + n_t - \frac{\E[Y_t^3]}{3},
\end{equation}
we obtain that
\begin{equation}
\label{e.SDE.for.td}
\d \td X_t = \Ll[(1-m_t^2)\td X_t - m_t \Ll( \td X_t^2 - \E[\td X_t^2] \Rr) - \frac 1 3 \Ll( \td X_t^3 - \E[\td X_t^3] \Rr) \Rr]  \d t + \sqrt{2\ep} \, \d B_t,
\end{equation}
and similarly for $\td Y$. 
\begin{proposition}[Moment bounds]
\label{p.moment.bounds} 
For every even integer $k \ge 2$ and provided that $\E[X_0^k + Y_0^k]$ is finite, there exists a constant $C_k$ depending only on $k$ and $\E[X_0^k + Y_0^k]$ such that for every $\al \ge 0$, $\ep \in (0,1]$, and $t \ge 0$, we have
\begin{equation*}  
\E[\td X_t^k + \td Y_t^k] \le C_k.
\end{equation*}
\end{proposition}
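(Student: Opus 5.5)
We prove the bound by applying Itô's formula to $\td X_t^k$ and closing a differential inequality for $\psi_k(t) := \E[\td X_t^k]$ (and likewise for $\td Y_t$). The key structural point is that the equation \eqref{e.SDE.for.td} for $\td X$ does not involve $\al$ explicitly. Its only dependence on the mean-field coupling is through $m_t$, and $|m_t|$ is bounded uniformly in $\al$ by Proposition~\ref{p.2moment.control}: indeed $m_t^2 + n_t^2 \le \max(8, \E[X_0^2+Y_0^2]) =: M$. So the constants will depend only on $k$ and the initial moments. The drift contains the good term $-\frac13 \td X^3$, which dominates everything else at large $|\td X|$. We therefore expect an inequality of the form
\begin{equation*}
\dr_t \psi_k \le C - c\, \psi_k^{(k+2)/k},
\end{equation*}
and a comparison argument then gives $\psi_k(t) \le \max(\psi_k(0), C')$.

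\textbf{Step 1: Itô's formula.} For even $k$, Itô's formula gives
\begin{multline*}
\dr_t \E[\td X_t^k] = k\E\Big[\td X_t^{k-1}\Big((1-m_t^2)\td X_t - m_t(\td X_t^2 - \E[\td X_t^2]) - \tfrac13(\td X_t^3 - \E[\td X_t^3])\Big)\Big] \\
+ \ep k(k-1)\E[\td X_t^{k-2}].
\end{multline*}
The martingale term vanishes, after localization or by using the a priori finiteness of moments from well-posedness.

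\textbf{Step 2: bounding the terms.} The leading term is $-\frac k3 \E[\td X^{k+2}]$. The other terms are handled as follows.
\begin{itemize}
\item The term $k(1-m_t^2)\E[\td X^k]$ is at most $k\E[\td X^k]$.
\item The term $-k m_t \E[\td X^{k+1}]$ is bounded in absolute value by $k\sqrt{M}\,\E[|\td X|^{k+1}]$.
\item The term $k m_t \E[\td X^2]\E[\td X^{k-1}]$ is controlled by Hölder's inequality, giving at most $k\sqrt M\, \E[\td X^{k+2}]^{(k+1)/(k+2)}$.
\item The term $\frac k3 \E[\td X^3]\E[\td X^{k-1}]$ is at most $\frac k3 \E[|\td X|^{k+2}]^{3/(k+2)}\E[|\td X|^{k+2}]^{(k-1)/(k+2)} = \frac k3 \E[\td X^{k+2}]$.
\item The noise term is at most $k^2\E[\td X^{k-2}]$, using $\ep \le 1$.
\end{itemize}
All lower-order terms are absorbed by Young's inequality into $\eta\E[\td X^{k+2}] + C_\eta$.

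\textbf{Main obstacle.} The fourth bullet exactly cancels the good term $-\frac k3\E[\td X^{k+2}]$, so the naive bound leaves no coercivity. This is the hardest step, and we must exploit the sign structure rather than bound $\E[\td X^3]\E[\td X^{k-1}]$ crudely. The first attempt is to rewrite
\begin{equation*}
-\tfrac13\E\big[\td X^{k-1}(\td X^3 - \E[\td X^3])\big] = -\tfrac13 \mathrm{Cov}(\td X^{k-1}, \td X^3).
\end{equation*}
Since $x \mapsto x^{k-1}$ and $x \mapsto x^3$ are both increasing, Chebyshev's association (covariance) inequality shows this covariance is nonnegative. That only gives the sign, however, not coercivity. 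For quantitative coercivity we would use the identity
\begin{equation*}
\mathrm{Cov}(f(\td X), g(\td X)) = \tfrac12\E\big[(f(\td X)-f(\td X'))(g(\td X)-g(\td X'))\big]
\end{equation*}
with $\td X'$ an independent copy. We then combine it with the elementary inequality $(a^{k-1}-b^{k-1})(a^3-b^3) \ge c_k (a-b)^{k+2}$ for odd powers. Finally, $\E[(\td X - \td X')^{k+2}] \ge \E[\td X^{k+2}]$, by Jensen's inequality conditioning on $\td X$ and using that $\E[\td X'] = 0$. Together these give
\begin{equation*}
-\tfrac13\,\mathrm{Cov}(\td X^{k-1}, \td X^3) \le -c_k'\,\E[\td X^{k+2}].
\end{equation*}
The same symmetrization handles the $m_t$ quadratic term if needed: it equals $-m_t\,\mathrm{Cov}(\td X^{k-1}, \td X^2)$, which is at most $\sqrt M\, C\,\E[|\td X|^{k+1}]$.

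\textbf{Step 3: closing the argument.} With coercivity in hand, Jensen's inequality gives $\E[\td X^{k+2}] \ge \psi_k^{(k+2)/k}$, which yields the desired differential inequality. The comparison argument concludes, and the same computation applies to $\td Y$.
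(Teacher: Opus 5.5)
Your proof is correct, but it handles the key step differently from the paper.

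\textbf{How the paper avoids the obstacle.} The paper never needs coercivity of the centered cubic term. It uses that Proposition~\ref{p.2moment.control} already gives $\E[\td X_t^2] \le \E[X_t^2] \le C$ uniformly in $\al$. It then interpolates the cross term between the second moment and the top moment. For $k=4$ this gives $\E[\td X_t^3]^2 \le \E[\td X_t^2]^{3/2}\E[\td X_t^6]^{1/2} \le C\E[\td X_t^6]^{1/2}$. This is sublinear in $\E[\td X_t^6]$, so the full $-\frac13\E[\td X_t^6]$ survives. Once $|\E[\td X_t^3]|$ is bounded, the cross term for general even $k$ is just $C\E[|\td X_t|^{k-1}]$, which is lower order. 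The exact cancellation you found comes from interpolating only against $\E[\td X^{k+2}]$. Interpolating against the already bounded second moment makes the cross term sublinear, and this was available to you.

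\textbf{What your route buys.} Your argument is valid: you write the term as $-\frac13\mathrm{Cov}(\td X^{k-1},\td X^3)$, then use $(a^{k-1}-b^{k-1})(a^3-b^3)\ge c_k(a-b)^{k+2}$ for odd powers and $\E[(\td X-\td X')^{k+2}]\ge \E[\td X^{k+2}]$. This shows the centered cubic drift is coercive on its own, with no a priori information on $\td X$. It treats every even $k$, including $k=2$, in a single step without bootstrapping from $k=4$. It uses Proposition~\ref{p.2moment.control} only for $\sup_t|m_t|$, and it yields explicit constants. The paper's route is more elementary, being plain H\"older interpolation, but it relies on the a priori second-moment bound and a two-stage argument.
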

\begin{proof}
By It\^o's formula, we have for every integer $k \ge 2$ that
\begin{multline}  
\label{e.ito.pmoment}
\frac 1 k \dr_t \E[\td X_t^k] 
 = \E \Ll[\td X_t^{k-1}\Ll( (1-m_t^2) \td X_t -m_t \Ll(\td X_t^2 - \E[\td X_t^2] \Rr)
 - \frac 1 3 \Ll( \td X_t^3 - \E[\td X_t^3] \Rr)\Rr) \Rr] 
 \\ 
 + \ep (k-1) \E[\td X_t^{k-2}].
\end{multline}
We start with the analysis of the case $k = 4$. Noting that $\frac 1 3 = \frac{1/2}{2} + \frac{1/2}{6}$, we have by H\"older's inequality that
\begin{equation}  
\label{e.holder.326}
\E[|\td X_t|^3] \le \E[\td X_t^2]^{\frac 3 4} \E[\td X_t^6]^{\frac 1 4}.
\end{equation}
By Proposition~\ref{p.2moment.control}, we have that 
\begin{equation*}  
\E[\td X_t^2] \le \E[X_t^2] \le C, \qquad |m_t|^2 + |n_t|^2 \le C,
\end{equation*}
where the constant $C < +\infty$ is allowed to depend on an upper bound on $\E[X_0^2+ Y_0^2]$. We thus find that, possibly after enlarging the constant $C$, 
\begin{equation*}  
\frac 1 4 \dr_t \E[\td X_t^4] \le \E[\td X_t^4] + C \Ll( \E[|\td X_t|^5] +  \E[|\td X_t|^3] + \E[\td X_t^6]^\frac 1 2 + \ep \Rr) - \frac 1 3 \E[\td X_t^6] .
\end{equation*}
For some constant $C' < +\infty$, we have for every $x \in \R$ that $C |x|^5 \le \frac{x^6}{9} + C'$ and $C \sqrt{|x|} \le \frac {|x|}{9}+ C'$, hence 
\begin{equation*}  
\frac 1 4 \dr_t \E[\td X_t^4] \le \E[\td X_t^4] + C \E[|\td X_t|^3] - \frac 1 9 \E[\td X_t^6] + 2CC' +  C \ep.
\end{equation*}
Denoting $\phi(t) := \E[\td X_t^4]$, using Jensen's inequality and the assumption $\ep \le 1$, and reabsorbing everything into a larger constant $C < +\infty$, we have that
\begin{equation*}  
\frac 1 4 \dr_t \phi(t) \le \phi(t) + C \phi(t)^{\frac 3 4} - \frac 1 9 \phi(t)^{\frac 3 2} + C.
\end{equation*}
The right-hand side is negative whenever $\phi(t)$ is sufficiently large, so we obtain the result for the fourth moment of $\td X_t$. 

Now that $|\E[\td X_t^3]|$ is controlled, a similar argument allows one to bound the moments of $\td X_t$ order $k$ for every even integer~$k$ from \eqref{e.ito.pmoment}. The argument for controlling the moments of $\td Y_t$ is identical.
\end{proof}

For convenience, we use the notation $p_t := (m_t, n_t)$. We also recall the notation $T_\al$ from Proposition~\ref{p.limitcycle} for the minimal period of the deterministic dynamics \eqref{e.particleODE} along the cycle. Our main result Theorem~\ref{t.main} is a direct consequence of the following inductive structure.
\begin{theorem}[Induction]
\label{t.bootstrap}
There exists a constant $C < +\infty$ such that the following holds for every $k  > \frac 3 2$, $\alpha < +\infty$ sufficiently large, $\eps > 0$ sufficiently small, initialization $X_0, Y_0$ with $X_0^2 + Y_0^2 \le 8$, and $t_0 \ge 0$. If
\begin{equation}  
\label{e.bootstrap.ass}
\E[\td X_{t_0}^2]\le \al^{-k}, \quad \E[\td Y_{t_0}^2]\le \al^{-k}, \quad \dist(p_{t_0}, \msc C_\al) \le  \alpha^{1 - k},
\end{equation}
then the same inequalities are valid at time $t_0 + T_\al$, that is,
\begin{equation}  
\label{e.bootstrap.ccl}
\E[\td X_{t_0+T_\al}^2]\le \al^{-k}, \quad \E[\td Y_{t_0+T_\al}^2]\le \al^{-k}, \quad \dist(p_{t_0+T_\al}, \msc C_\al) \le \alpha^{1 - k},
\end{equation}
and moreover, letting $\bar p_{t_0}$ denote the point on $\msc C_\al$ that is closest to $p_{t_0}$ and $(\bar p_t)_{t \ge t_0}$ denote the solution to the deterministic system in \eqref{e.particleODE}, we have for every $t \in [t_0,t_0+T_\al]$ that
\begin{equation}  
\label{e.bootstrap.cycle}
|p_t - \bar p_t| \le C \al^{1 -k}.
\end{equation}
\end{theorem}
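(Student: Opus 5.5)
We treat one period $[t_0,t_0+T_\al]$ at a time, tracking two quantities: the variances $v_t := \E[\td X_t^2]$ and $w_t := \E[\td Y_t^2]$, and the deviation of the mean $p_t$ from the deterministic trajectory $\bar p_t$. Since $T_\al \le 4\pi/(2\al-1) \le C/\al$ by Proposition~\ref{p.limitcycle}, any quantity with bounded time derivative moves by $O(1/\al)$ over one period. The point is to get $\al$-independent constants everywhere, using the uniform moment bounds of Propositions~\ref{p.2moment.control} and~\ref{p.moment.bounds}. These apply because $X_0^2+Y_0^2\le 8$ makes all moments finite with deterministic bounds.

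\emph{Step 1 (variance over one period).} By It\^o's formula applied to \eqref{e.SDE.for.td} (the case $k=2$ of \eqref{e.ito.pmoment}),
\begin{equation*}
\tfrac12 \dr_t v_t = (1-m_t^2) v_t - m_t \E[\td X_t^3] - \tfrac13\Ll(\E[\td X_t^4]-v_t^2\Rr) + \ep .
\end{equation*}
We bound $|\E[\td X_t^3]| \le v_t^{1/2}\E[\td X_t^4]^{1/2}$ and $\E[\td X_t^4]\le v_t^{1/2}\E[\td X_t^6]^{1/2}$. Together with Proposition~\ref{p.moment.bounds}, this gives a crude inequality of the form $\dr_t v_t \ge -C v_t^{1/2}$ and $\dr_t v_t \le Cv_t + C\ep$, which would already prevent blow-up over time $C/\al$. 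For contraction we instead want an averaged statement.

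The quartic term is nonpositive: $\E[\td X^4]\ge v^2$, and the bad sign only comes from $m_t\E[\td X_t^3]$. I would handle this by working with the variance-only inequality
\begin{equation*}
\dr_t v_t \le 2(1-m_t^2)v_t + C v_t^{5/4} + 2\ep,
\end{equation*}
using $|\E\td X^3|\le C v^{3/4}$ via Hölder \eqref{e.holder.326} and the sixth moment bound. Integrating via Grönwall gives
\begin{equation*}
v_{t_0+T_\al} \le \exp\Ll(\int_{t_0}^{t_0+T_\al} 2(1-m_t^2)\,\d t + CT_\al v_*^{1/4}\Rr)\Ll(v_{t_0} + 2\ep T_\al e^{O(T_\al)}\Rr),
\end{equation*}
where $v_*$ is an a priori bound on $v$ over the period, of order $\al^{-k}$ by the crude bound.

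By Step 2 below, $m_t$ is within $C\al^{1-k}$ of $\bar m_t$. Proposition~\ref{p.square} then gives $\int 2(1-m_t^2) \le T_\al(2-3+o(1)) = -T_\al(1-o(1))$. So $v$ contracts by a factor $1 - cT_\al \le 1 - c/\al$, while the noise adds at most $C\ep/\al$. Choosing $\ep \le c\al^{-k}$ yields $v_{t_0+T_\al}\le \al^{-k}$, and likewise for $w$ using $\int n_t^2$.

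\emph{Step 2 (mean tracking).} From \eqref{e.ev.mn}, $\E X^3 = m^3 + 3m v + \E\td X^3$, so
\begin{equation*}
\dr_t p_t = F_\al(p_t) + E_t, \qquad |E_t| \le C(v_t + w_t)^{3/4}\le C\al^{-3k/4}
\end{equation*}
on the period. Comparing with $\bar p_t$, the Jacobian of $F_\al$ has antisymmetric part $\al J$, which is a rotation, and a symmetric part bounded by $C$. Hence $\frac{\d}{\d t}|p_t-\bar p_t|^2 \le C|p_t-\bar p_t|^2 + C|p_t-\bar p_t|\,|E_t|$. Since $T_\al\le C/\al$, we get $|p_t-\bar p_t|\le C(\al^{1-k} + \al^{-1-3k/4})\le C\al^{1-k}$, which is \eqref{e.bootstrap.cycle}; this uses $k>3/2$ only mildly.

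For the distance to the cycle, I would use Proposition~\ref{p.lyapunov} with $g=\dist^2(\cdot,\msc C_\al)$:
\begin{equation*}
\dr_t g(p_t) \le -c\,g(p_t) + 2\sqrt{g(p_t)}\,|E_t| .
\end{equation*}
Thus $\sqrt g$ satisfies $\dr_t\sqrt g\le -\tfrac c2\sqrt g + |E_t|$. Over a time $T_\al\sim 2\pi/\al$ this gives
\begin{equation*}
\sqrt{g}(t_0+T_\al) \le (1-c'/\al)\,\al^{1-k} + C\al^{-1}\al^{-3k/4}.
\end{equation*}
The condition $\al^{-1-3k/4}\ll \al^{-k}$, that is $k/4<1$, fails for large $k$. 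So I should instead use the sharper bound $|E_t|\le C(v_t+w_t) + |\E\td X^3|$ and control the third moment better.

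\emph{Main obstacle.} The main difficulty is the third centered moment. The crude bound $|\E\td X^3|\lesssim v^{3/4}$ loses too much. I would prove a companion estimate $|\E\td X_t^3| \le C\al^{-k}$, or more generally $\E[\td X^{2j}]\le C_j\al^{-jk}$, propagated alongside the variance. This follows from \eqref{e.ito.pmoment}: for small fluctuations the dynamics is nearly Ornstein--Uhlenbeck with rate $1-m_t^2$, and the noise contributes $\ep\,\E\td X^{2j-2}$, so hypercontractive-type scaling holds when $\ep\le\al^{-k}$. With $|E_t|\le C\al^{-k}$, the drift term contributes $C\al^{-1-k}$, which is absorbed by the contraction $c'\al^{-k}$ for $\al$ large. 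This closes the distance bound at $\al^{1-k}$.

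Alternatively, the exact order of the deviation terms is not needed: the third inequality in \eqref{e.bootstrap.ass} has room $\al^{1-k}$ versus an error of order $\al^{-1}\cdot\al^{-3k/4}$, which suffices precisely when $k/4 < 2-\ldots$. If the simple route fails, I would carry the higher-moment bootstrap into the induction hypothesis.
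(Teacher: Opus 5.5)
\textbf{There is a genuine gap in your Step~1, which is the heart of the induction.}

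The inequality $\dr_t v_t \le 2(1-m_t^2)v_t + Cv_t^{5/4} + 2\ep$ does not follow from what you wrote. H\"older gives $|m_t\,\E[\td X_t^3]| \le C v_t^{3/4}$, not $C v_t^{5/4}$. The same cubic term is also silently dropped from your crude bound $\dr_t v_t \le Cv_t + C\ep$.

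No bound of the form $|\E[\td X_t^3]| \le C v_t^{1-\eta}$ with $\eta>0$ can be absorbed. Over one period it contributes $C T_\al \al^{-k(1-\eta)}$. The gain coming from $\frac1{T_\al}\int m_t^2 \ge \frac54$ is only of order $T_\al\al^{-k}$, so the error wins. Even the companion estimate $|\E[\td X_t^3]|\le C\al^{-k}$ that you propose later would give an error of the same order as the gain, so it does not close either.

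The missing idea is a cancellation in time, not a size bound. Over one period, $\E[\td X_t^3]$ is essentially frozen: its derivative is $O(\al^{\frac12-k})$ by \eqref{e.ito.pmoment} with exponent $3$, while $T_\al = O(\al^{-1})$. Meanwhile $m_t$ has almost zero average, because it tracks $\bar m_t$ within $C\al^{1-k}$ and $\int_0^{T_\al}\bar m_t\,\d t = 0$ by Proposition~\ref{p.square}. Hence
\[
\int_{t_0}^{t_0+T_\al} m_t\,\E[\td X_t^3]\,\d t = \E[\td X_{t_0}^3]\int_{t_0}^{t_0+T_\al} m_t\,\d t + O(\al^{-\frac32-k}) = O(\al^{\frac12-2k}) + O(\al^{-\frac32-k}),
\]
which is $o(\al^{-1-k})$ precisely when $k>\frac32$. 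This is the only place the hypothesis $k>\frac32$ enters. Your argument never uses it, which is a symptom of the gap.

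\textbf{Second, the fixed H\"older exponent $\frac34$ is too lossy in your other steps as well.} With $|E_t|\le C\al^{-3k/4}$:
\begin{enumerate}
\item the a priori variance bound over a period only closes for $k<4$;
\item your tracking bound $C(\al^{1-k}+\al^{-1-3k/4})\le C\al^{1-k}$ needs $k\le 8$;
\item the distance step fails for $k\ge 4$, as you noticed.
\end{enumerate}

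You do not need a higher-moment bootstrap to repair this, and yours is only sketched. Proposition~\ref{p.moment.bounds} bounds all moments uniformly, so H\"older gives $\E[|\td X_t|^3]\le C\,\E[\td X_t^2]^{1-\frac1{2k}}$, hence $|E_t|\le C\al^{\frac12-k}$. Young's inequality then yields $\dr_t g(p_t)\le -\frac c2 g(p_t) + C\al^{1-2k}$. Since $C\al^{1-2k}\le \al^{2-2k}$ for $\al$ large, the sublevel set $\{g\le \al^{2-2k}\}$ is invariant. The distance threshold $\al^{1-k}$ has a full factor $\al$ of room over the variance scale.

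Your tracking argument, using that the antisymmetric part of $\nabla F_\al$ is the constant matrix $\al J$, is correct. It is a nice alternative to bounding $L_\al T_\al\le C$.

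Minor: the It\^o identity has $-\frac13\E[\td X_t^4]$, not $-\frac13(\E[\td X_t^4]-v_t^2)$. This is harmless, since you drop that term.
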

\begin{proof}
We fix $k > \frac 3 2$. Under our assumption that the initial condition must satisfy $X_0^2 + Y_0^2 \le 8$, we can bound all moments of $X_t$ and $Y_t$ uniformly over $\alpha \ge 0$, $\ep \in (0,1]$, and $t \ge 0$. In particular, we have $|p_t|^2 = |m_t|^2 + |n_t|^2 \le 8$ for all $t \ge 0$. From now on, we assume that \eqref{e.bootstrap.ass} holds. 

\medskip

\noindent \emph{Step 1.} In this step, we show that for $\alpha$ sufficiently large and $\eps > 0$ sufficiently small, we have for every $t \in [t_0, t_0 + T_\al]$ that $\E[\td X_t^2] \le 2\al^{-k}$ and $\E[\td Y_t^2] \le 2\al^{-k}$. Using that $|m_t|^2 \le 8$ and the It\^o identity \eqref{e.ito.pmoment}, we have that
\begin{equation*}  
\frac 1 2 \dr_t \E[\td X_t^2] \le  \E[\td X_t^2] + 3 \E[|\td X_t|^3] + \ep.
\end{equation*}
Since by Proposition~\ref{p.moment.bounds} all moments of $\td X_t$ are bounded, we can appeal to H\"older's inequality to find a constant $C < +\infty$ such that $\E[|\td X_t|^3] \le C \E[\td X_t^2]^{1- \frac 1 {2k}}$, so that
\begin{equation}  
\label{e.drX2.crude.bound}
\frac 1 2 \dr_t \E[\td X_t^2] \le  \E[\td X_t^2] + C \E[\td X_t^2]^{1-\frac 1 {2k}} + \ep.
\end{equation}
Denoting $T := \sup \Ll\{ t \in [t_0, t_0 + T_\al] \ : \  \E[\td X_t^2] \le 2 \al^{-k} \Rr\}$, using that $T_\al \le 4\pi/\al$ and redefining $C < +\infty$ as necessary, we have for every $t \in [t_0, T]$ that 
\begin{equation*}  
\E[\td X_t^2] \le \al^{-k} + C \al^{-1} \Ll( \al^{-k} + \al^{\frac 1 2 - k} + \ep \Rr)  .
\end{equation*}
For $\alpha$ sufficiently large and $\ep > 0$ sufficiently small, we can make sure that the right-hand side above is smaller than $2\al^{-k}$, and therefore that $T = t_0 + T_\alpha$ as desired. The argument for $\E[\td Y_t^2]$ is identical. 

\medskip

\noindent \emph{Step 2.} In this step, we show that provided that $\alpha$ is sufficiently large, we have for every $t \in [t_0, t_0 + T_\al]$ that 
\begin{equation}  
\label{e.dist.unif.time}
\dist(p_t, \msc C_\al) \le \al^{1-k}.
\end{equation}
Recall the notation $F_\al$ in~\eqref{e.def.F}, and the identity \eqref{e.ev.mn} which we can rewrite as
\begin{equation}  
\label{e.evol.p.R}
\dr_t p_t = F_\al(p_t) + R_t, 
\end{equation}
where we have set
\begin{align*}  
R_t & := \frac 1 3 (m_t^3 - \E[X_t^3], n_t^3 - \E[Y_t^3]) 
\\
& =  \Ll(-m_t \E[\td X_t^2] -\frac 1 3 \E[\td X_t^3], -n_t \E[\td Y_t^2] - \frac 1 3 \E[\td Y_t^3]\Rr).
\end{align*}
Recalling that $\td X_t$ has bounded moments of every order, and using the result of the previous step, we can argue as in the sentence above \eqref{e.drX2.crude.bound} to find a constant $C < +\infty$ such that for every $t \in [t_0, t_0 + T_\al]$,
\begin{equation}  
\label{e.bound.R}
|R_t| \le C \al^{\frac 1 2 - k}.
\end{equation}
Recall that we write $g(z) = \dist^2(z, \msc C_\al)$. We have
\begin{equation*}  
\dr_t (g(p_t)) = (F_\al \cdot \nabla g)(p_t) + R_t \cdot \nabla g(p_t). 
\end{equation*}
Let $\delta, c > 0$ be as given by Proposition~\ref{p.lyapunov}, and let $T := \sup \{ t \in [t_0, t_0 + T_\al] \ : \ g(p_t) \le \de\}$. For every $t \in [t_0, T]$, we can appeal to this proposition to bound the first term in the previous display by $-c g(p_t)$. 
Using that $|\nabla g(z)| = 2  \sqrt{g(z)}$, the Cauchy-Schwarz and Young's inequalities, we have
\begin{equation*}  
|R_t \cdot \nabla g(p_t)| \le 2 \sqrt{g(p_t)} |R_t| \le \frac{c}{2} g(p_t) + \frac {2|R_t|^2}{c}.
\end{equation*}
Combining these and enlarging the constant $C < +\infty$, we obtain that for every $t \in [t_0, T]$,
\begin{equation*}  
\dr_t (g(p_t)) \le -\frac c 2 g(p_t) + C \al^{1-2k}.
\end{equation*}
Integrating this (and updating $C$) yields that for every $t \le T$, 
\begin{equation}
\notag
g(p_t)  \le e^{-\frac{c}{2}t} \al^{2-2k} + C \al^{1-2k} (1 - e^{-\frac c 2 t}).
\end{equation}
By taking $\al \ge C$, we ensure that the right-hand side of the inequality above remains below $\al^{2-2k}$. In particular, since $k > 1$, we have $T = t_0 + T_\al$ and the announced inequality \eqref{e.dist.unif.time} is proved.

\medskip

\noindent \emph{Step 3.} For $\alpha$ sufficiently large, the point in $\msc C_\al$ that is closest to $p_{t_0}$ is well-defined; we denote it by $\bar p_{t_0} = (\bar m_{t_0}, \bar n_{t_0})$, and let $\bar p_t = (\bar m_t, \bar n_t)_{t \ge t_0}$ denote the solution to the system \eqref{e.particleODE} with this initialization. In this step, we show that for a sufficiently large constant $C < +\infty$, the inequality \eqref{e.bootstrap.cycle} is valid. We denote the $L^\infty$ norm of $\nabla F_\al$ on the ball of radius $\sqrt{8}$ by $L_\al$. Recalling that $(p_t)_{t \ge t_0}$ and $(\bar p_t)_{t \ge t_0}$ both take values in this ball, as well as \eqref{e.evol.p.R} and \eqref{e.bound.R}, we have for almost every $t \in [t_0, t_0 + T_\al]$ that
\begin{equation*}  
\dr_t \Ll(|p_t - \bar p_t|\Rr) \le L_\al |p_t - \bar p_t| + C \al^{\frac 1 2 - k},
\end{equation*}
with
\begin{equation*}  
|p_0 -\bar p_0| \le \al^{1-k}.
\end{equation*}
Integrating this, we find that for every $t \in [t_0, t_0 + T_\al]$,
\begin{equation*}  
|p_t -\bar p_t| \le e^{L_\al T_\al} \Ll(\al^{1-k} + C \al^{\frac 1 2 - k}\Rr) .
\end{equation*}
The conclusion follows by noting that $L_\al \le C \al$ while $T_\al \le 4\pi/\al$.

\medskip

\noindent \emph{Step 4.} We now show that $\E[\td X_{t_0 + T_\al}^2] \le \al^{-k}$ and $\E[\td Y_{t_0 + T_\al}^2] \le \al^{-k}$. As preparation for this, we deduce from the previous step and Proposition~\ref{p.square} that
\begin{equation}  
\label{e.nice.m}
\Ll| \frac 1 {T_\al} \int_{t_0}^{t_0 + T_\al} m_t \, \d t \Rr| \le C \al^{1-k}, \qquad \frac 1 {T_\al} \int_{t_0}^{t_0 + T_\al} m_t^2 \, \d t \ge \frac 5 4.
\end{equation}
Using \eqref{e.ito.pmoment} with $k = 2$, we have
\begin{equation*}  
\frac 1 2 \dr_t \E[\td X_t^2] = (1-m_t^2) \E[\td X_t^2] - m_t \E[\td X_t^3] - \frac 1 3 \E[\td X_t^4] + \ep.
\end{equation*}
We recall from \eqref{e.drX2.crude.bound} that $\dr_t \E[\td X_t^2] \le C \al^{\frac 1 2 - k} + \ep$, and a small modification of the argument leading to this bound, starting from the previous display, allows one to bound the absolute value of $\dr_t \E[\td X_t^2]$ by the same expression. For $\ep > 0$ sufficiently small, using again that $T_\al \le 4\pi/\al$, we thus have that
\begin{equation*}  
\int_{t_0}^{t_0 + T_\al}(1-m_t^2)\Ll( \E[\td X_t^2] - \E[\td X_{t_0}^2]\Rr)\, \d t \le C \al^{-\frac 3 2 - k}.
\end{equation*}
Using \eqref{e.ito.pmoment} with $k = 3$ and similar arguments as those leading to the bound on $|\dr_t \E[\td X_t^2]|$, one can show that $|\dr_t \E[\td X_t^3]| \le C \al^{\frac 1 2 - k}$, and therefore
\begin{equation*}  
\int_{t_0}^{t_0 + T_\al} m_t \Ll( \E[\td X_{t_0}^3] - \E[\td X_{t}^3] \Rr) \, \d t \le C \al^{-\frac 3 2 - k}.
\end{equation*}
Combining the last three displays (and using again that we take $\ep > 0$ sufficiently small and adjusting $C$), we obtain that 
\begin{multline*}  
\E[\td X_{t_0 + T_\al}^2] 
\le \E[\td X_{t_0}^2]\Ll(1 + 2 \int_{t_0}^{t_0 + T_\al} (1-m_t^2)\, \d t \Rr) 
\\
- 2 \E[\td X_{t_0}^3] \int_{t_0}^{t_0 + T_\al} m_t \, \d t + C \al^{-\frac 3 2 - k}.
\end{multline*}
Using again that $|\E[\td X_{t_0}^3]| \le C \al^{\frac 1 2 - k}$ and \eqref{e.nice.m} yields that
\begin{equation*}  
\E[\td X_{t_0 + T_\al}^2] \le \E[\td X_{t_0}^2] \Ll( 1 - \frac{T_\al}{2} \Rr) + C \al^{\frac 1 2 - 2k} + C \al^{-\frac 3 2 - k}.
\end{equation*}
Recalling that $\E[\td X_{t_0}^2] \le \al^{-k}$, that $T_\al \ge 2\pi/\al$, and that $k > \frac 3 2$, we obtain the desired result by taking $\al$ sufficiently large.
\end{proof}

\section{Numerical experiments}
\label{s.experiments}

In this section, we display some numerical experiments showing the behavior described in the main theorems. For this we compute a particle approximation of the minimax SDE that we study, that is, we take particles $(X_i, Y_i)_{1 \leq i \leq n}$ and solve
\begin{align}
\label{e.SDEexplicit_particles}
\d X^i_t &= \Ll( -\frac \al n \sum_{j = 1}^n Y_t^j + X^i_t - \frac{(X^i_t)^3}{3} \Rr) \d t + \sqrt{2\ep} \, \d B_t\, , \\ 
\d Y^i_t &= \Ll( \ \  \frac \al n \sum_{j = 1}^n X_t^j + Y^i_t - \frac{(Y^i_t)^3}{3} \Rr) \d t + \sqrt{2\ep} \, \d B'_t\, ,
\end{align}
via a standard Euler-Maruyama discretization scheme. In the following we took the parameters $N = 500$, $\alpha = 1.5$ and i.i.d. initializations taken as $X_0^i \sim  \mathcal{N}(-0.2,0.25)$ and $Y_0^i \sim  \mathcal{N}(0.4,0.25)$. We simulate until $T = 20$ to observe a steady behavior. We show this for two different diffusion coefficients: $\varepsilon = 0.25$ and $\varepsilon = 0.5$. In the first case the system appears to stabilize on a periodic motion, whereas in the second case, the system seems to converge to a stationary measure which should be, up to particle approximation, the one depicted in~\cite{wang2024open, seo2026local}. 

We show in Figure~\ref{fig:mean_variance} the empirical mean and variance of the particles as a function of time $t$. We observe that in the first case, for which $\varepsilon = 0.25$, these quantities stabilize to seemingly periodic functions, whereas in the second case, they converge to constants.
\begin{figure}
    \centering
    \includegraphics[width = \textwidth]{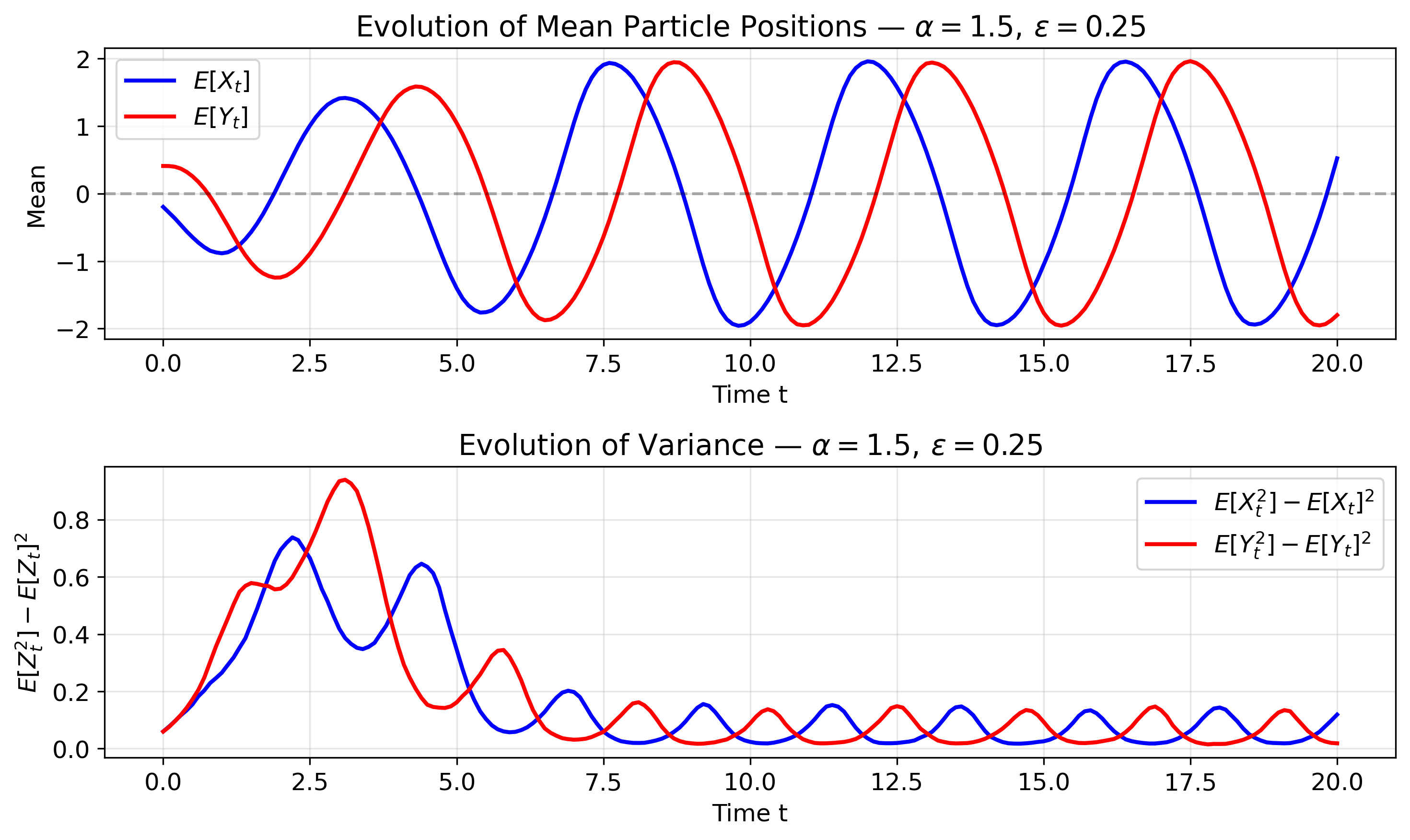}
    \includegraphics[width = \textwidth]{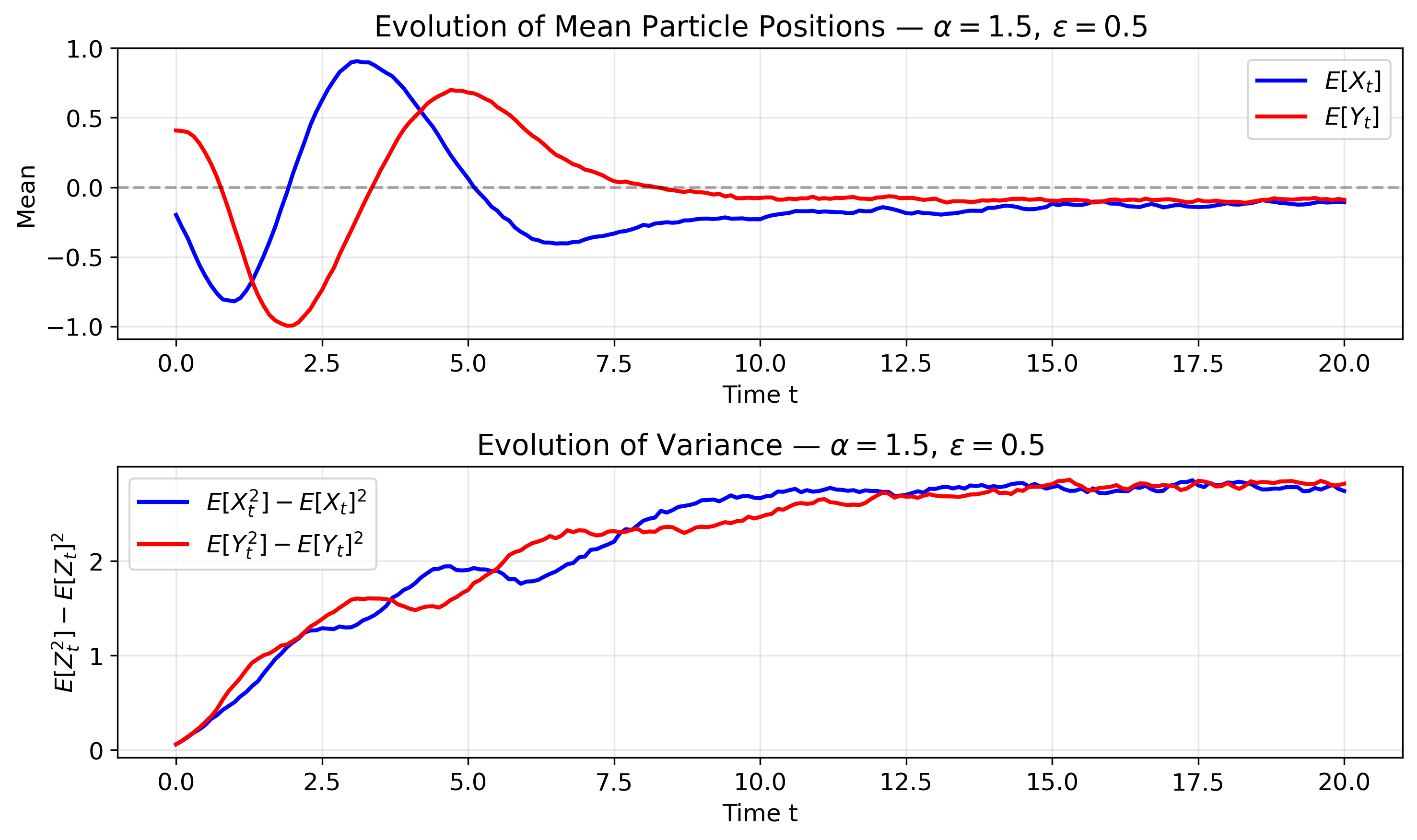}
    \caption{Mean and variance of the particles as a function of time for $\varepsilon = 0.25$ (two top plots) and $\varepsilon = 0.5$ (two bottom plots).}
    \label{fig:mean_variance}
\end{figure}

In Figure~\ref{fig:particles_snapshot}, we show snapshots of the particles at different times for the two different values of $\varepsilon$. We observe that in the first case ($\varepsilon = 0.25$), the particles fluctuate around a periodic cycle, which is close to the limit cycle of the deterministic system (depicted as a black line). On the other hand, in the second case the particles are spread out and seem to be samples of a stationary measure, which resembles a mixture of four Gaussians. Animations of the particle system are available at {\small \url{https://thebiglouloup.github.io/loucaspillaudvivien/minimax_animations.html}}.
\begin{figure}
    \centering
    \begin{subfigure}[b]{0.38\textwidth}
        \centering
        \includegraphics[width=\textwidth]{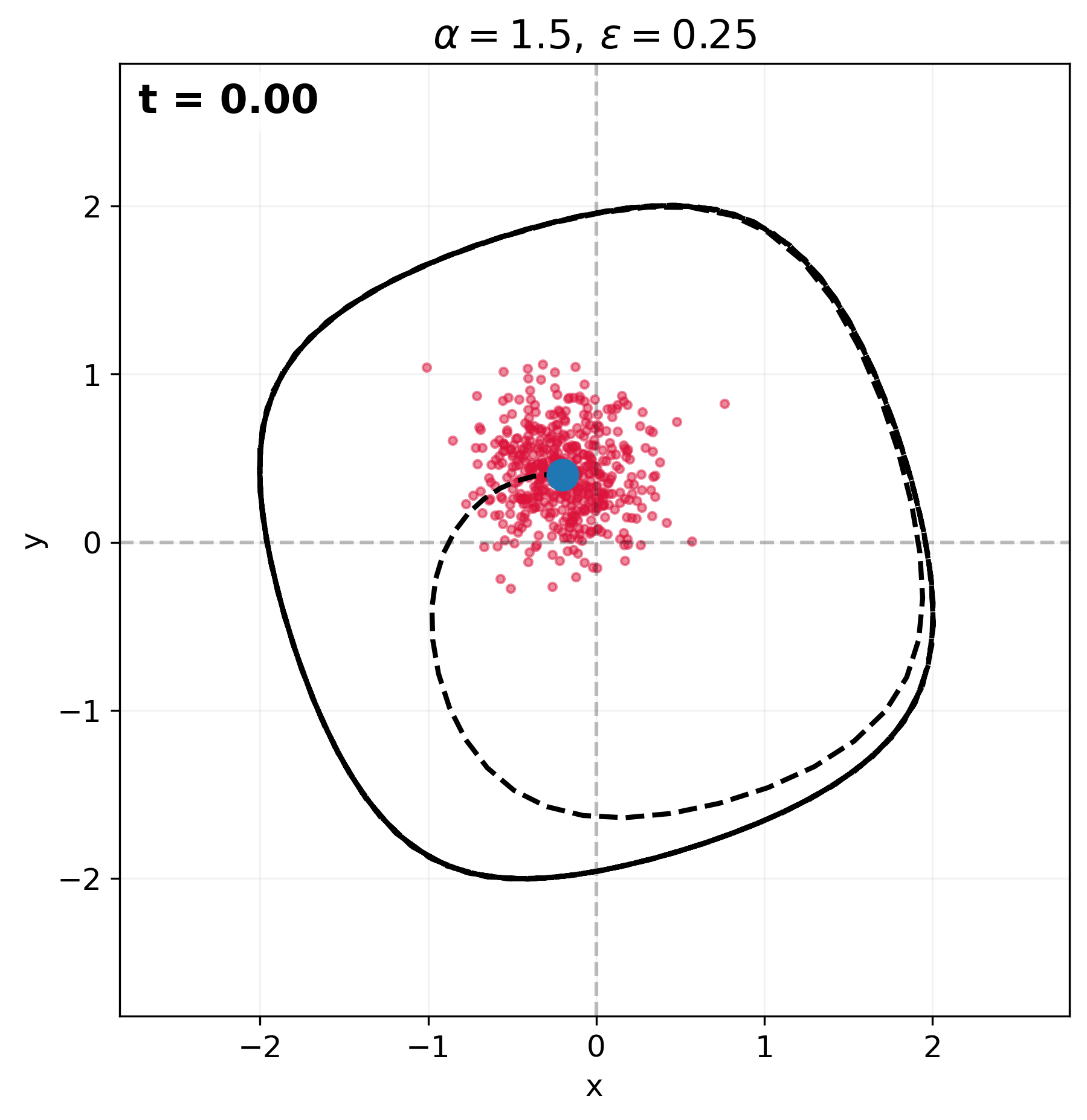}
    \end{subfigure}
    \hfill
    \begin{subfigure}[b]{0.38\textwidth}
        \centering
        \includegraphics[width=\textwidth]{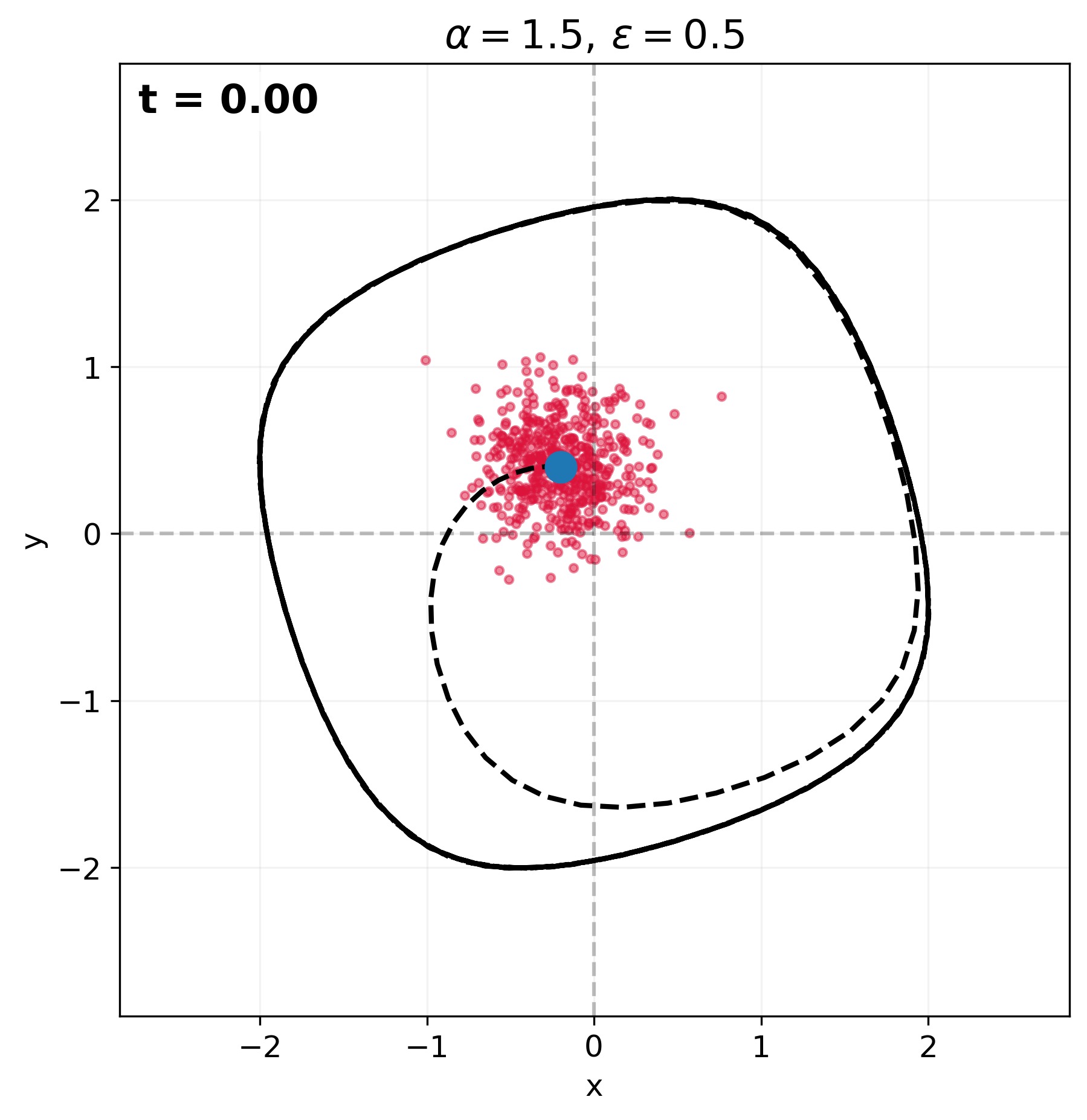}
    \end{subfigure}

    \begin{subfigure}[b]{0.38\textwidth}
        \centering
        \includegraphics[width=\textwidth]{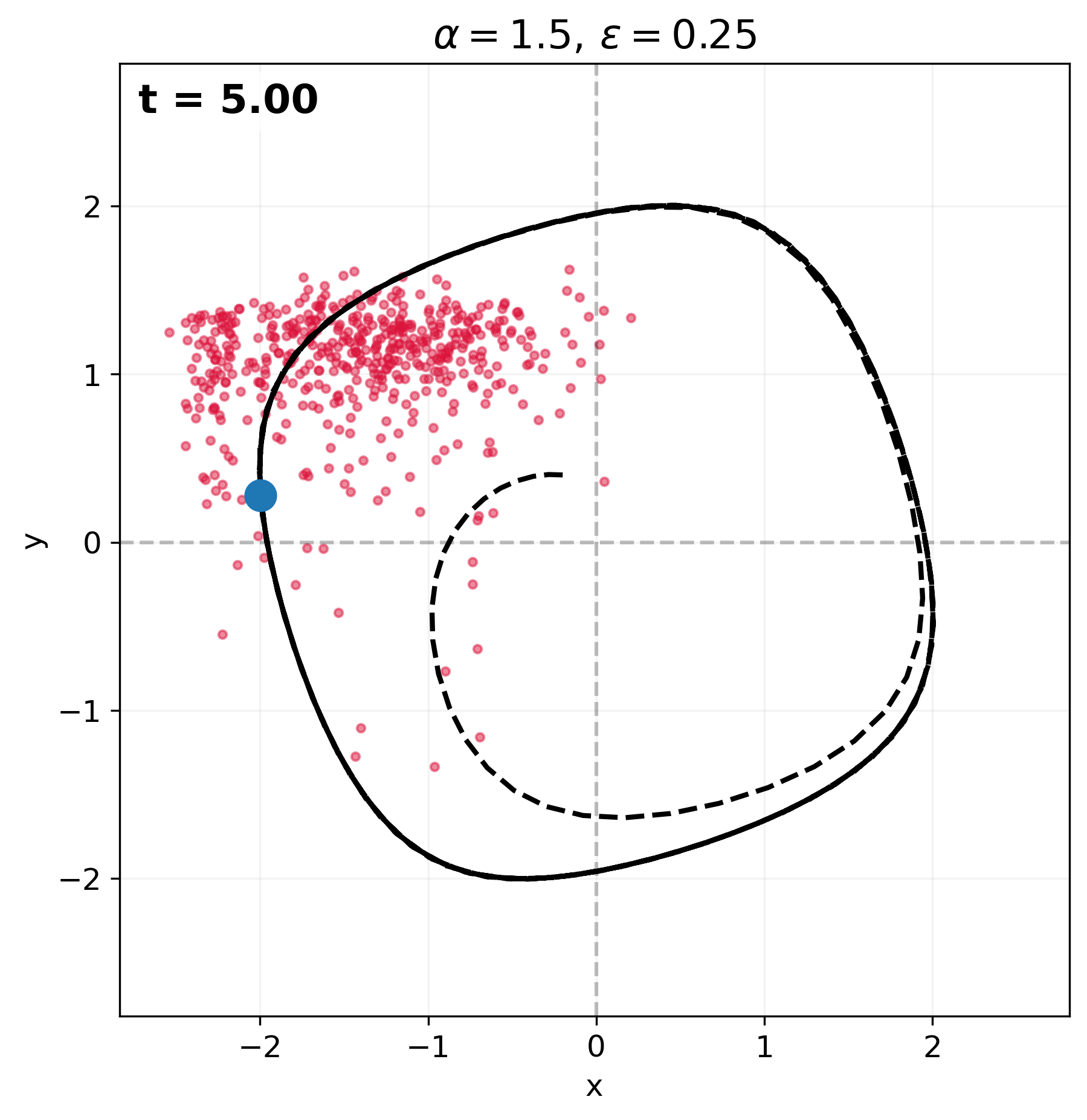}
    \end{subfigure}
    \hfill
    \begin{subfigure}[b]{0.38\textwidth}
        \centering
        \includegraphics[width=\textwidth]{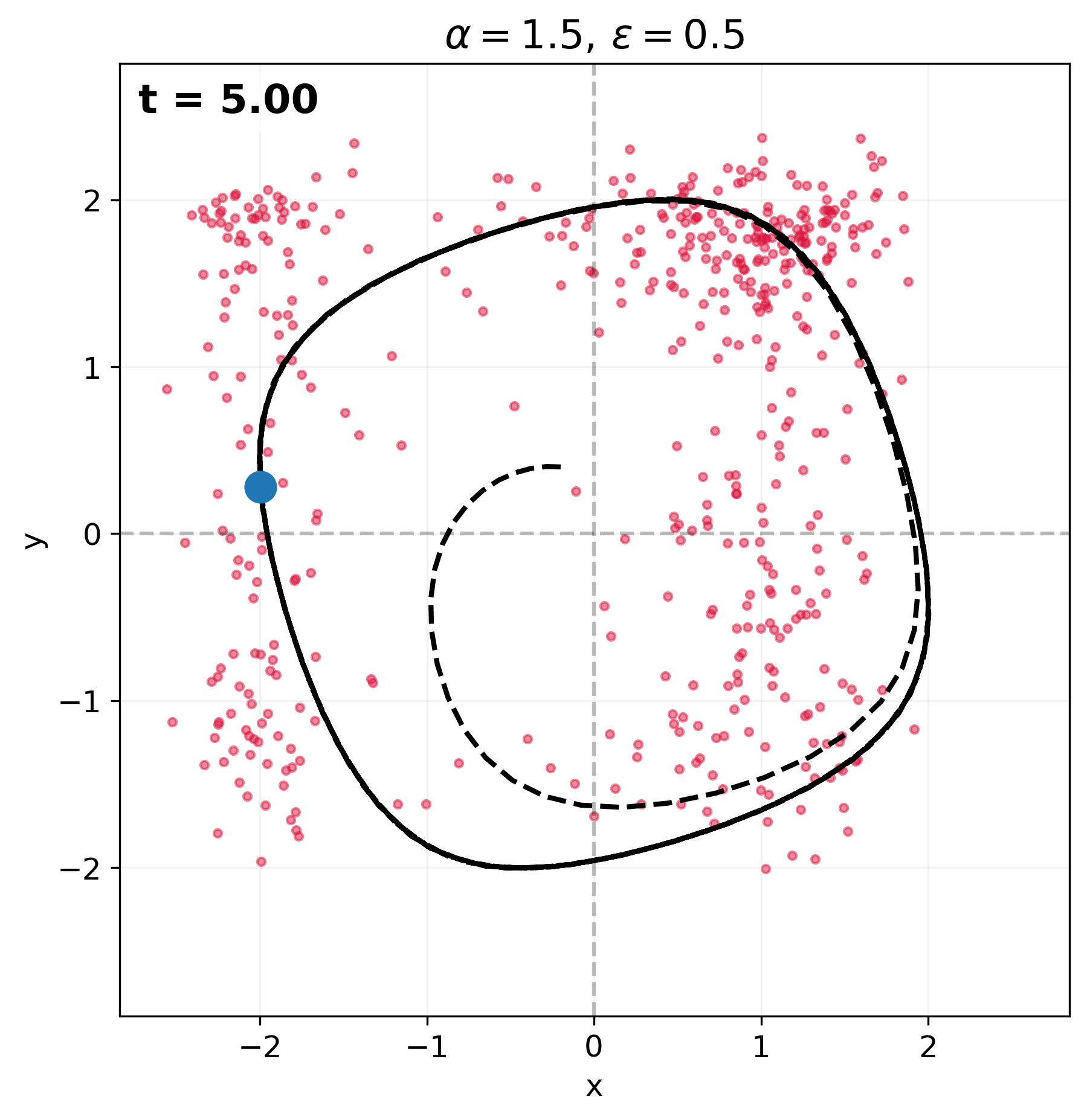}
    \end{subfigure}

        \begin{subfigure}[b]{0.38\textwidth}
        \centering
        \includegraphics[width=\textwidth]{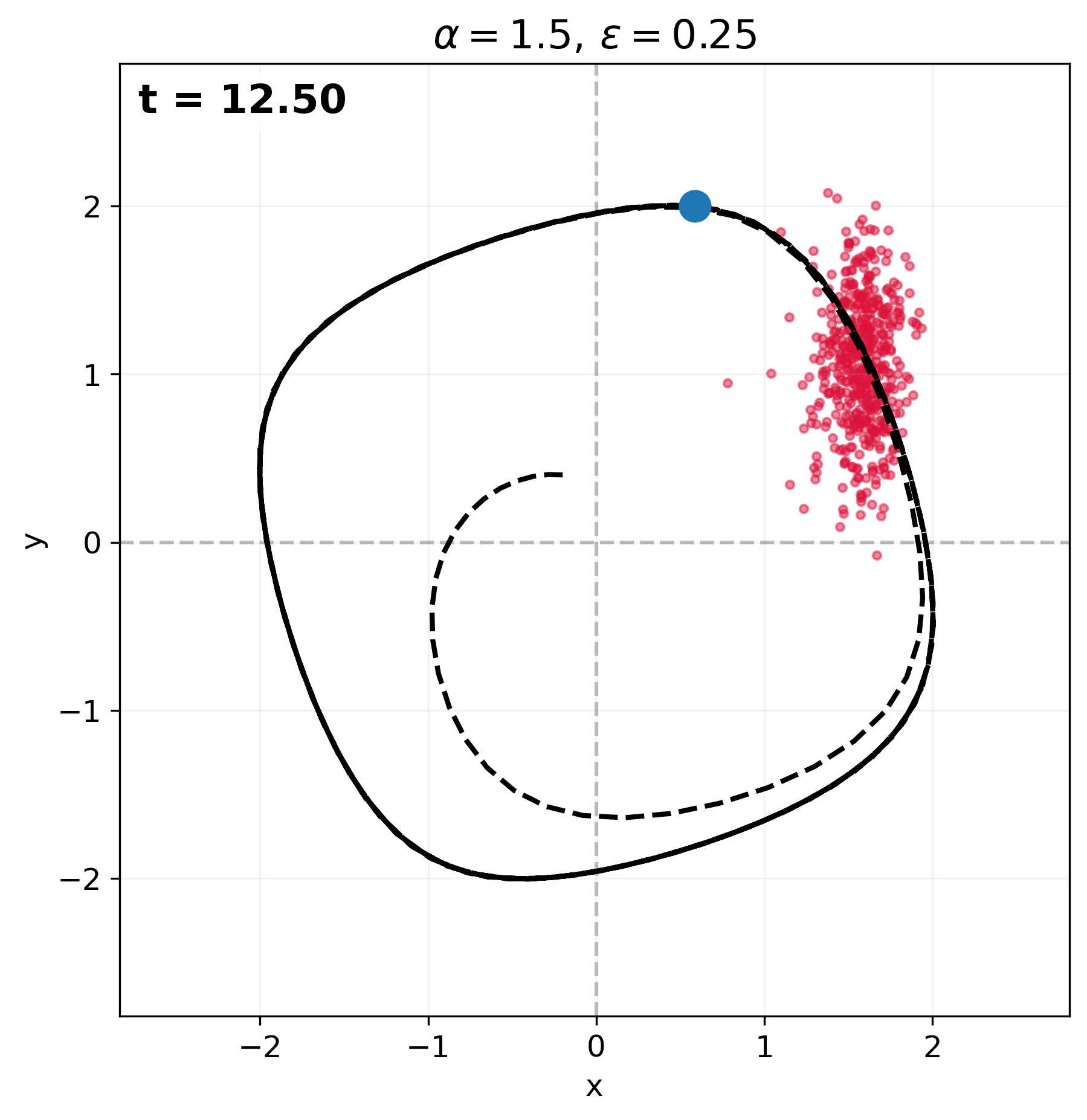}
    \end{subfigure}
    \hfill
    \begin{subfigure}[b]{0.38\textwidth}
        \centering
        \includegraphics[width=\textwidth]{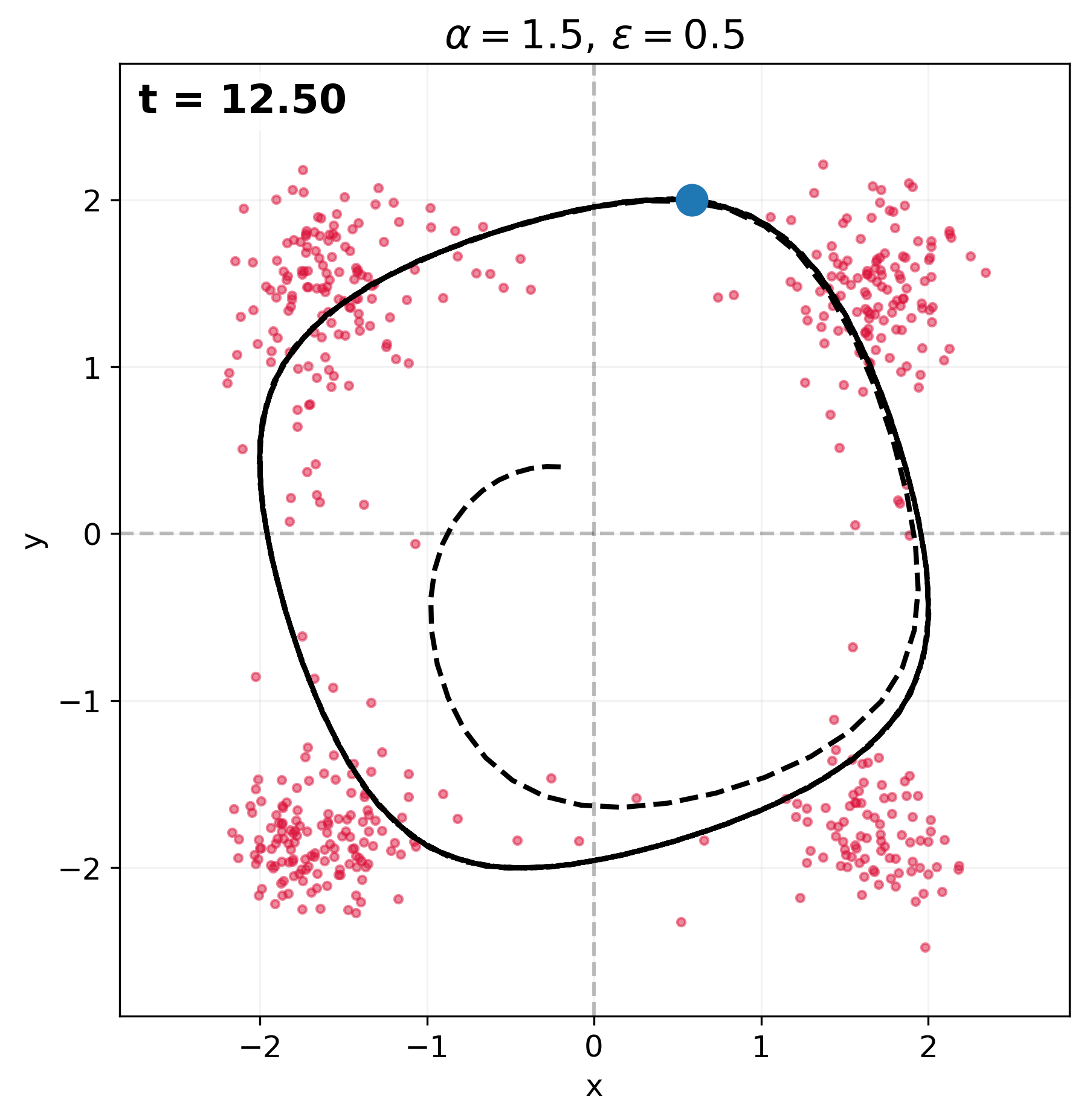}
    \end{subfigure}

    \begin{subfigure}[b]{0.38\textwidth}
        \centering
        \includegraphics[width=\textwidth]{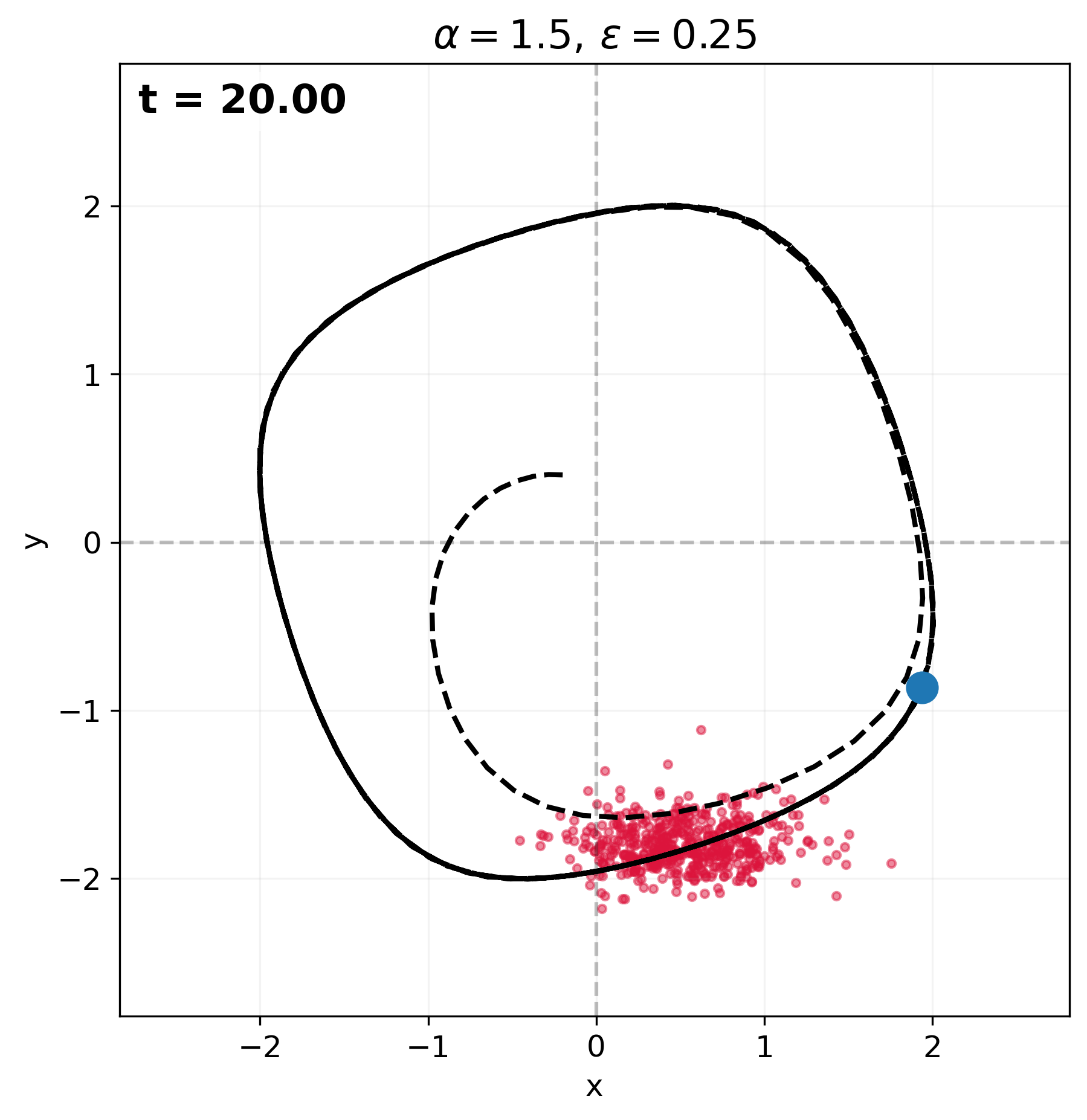}
    \end{subfigure}
    \hfill
    \begin{subfigure}[b]{0.38\textwidth}
        \centering
        \includegraphics[width=\textwidth]{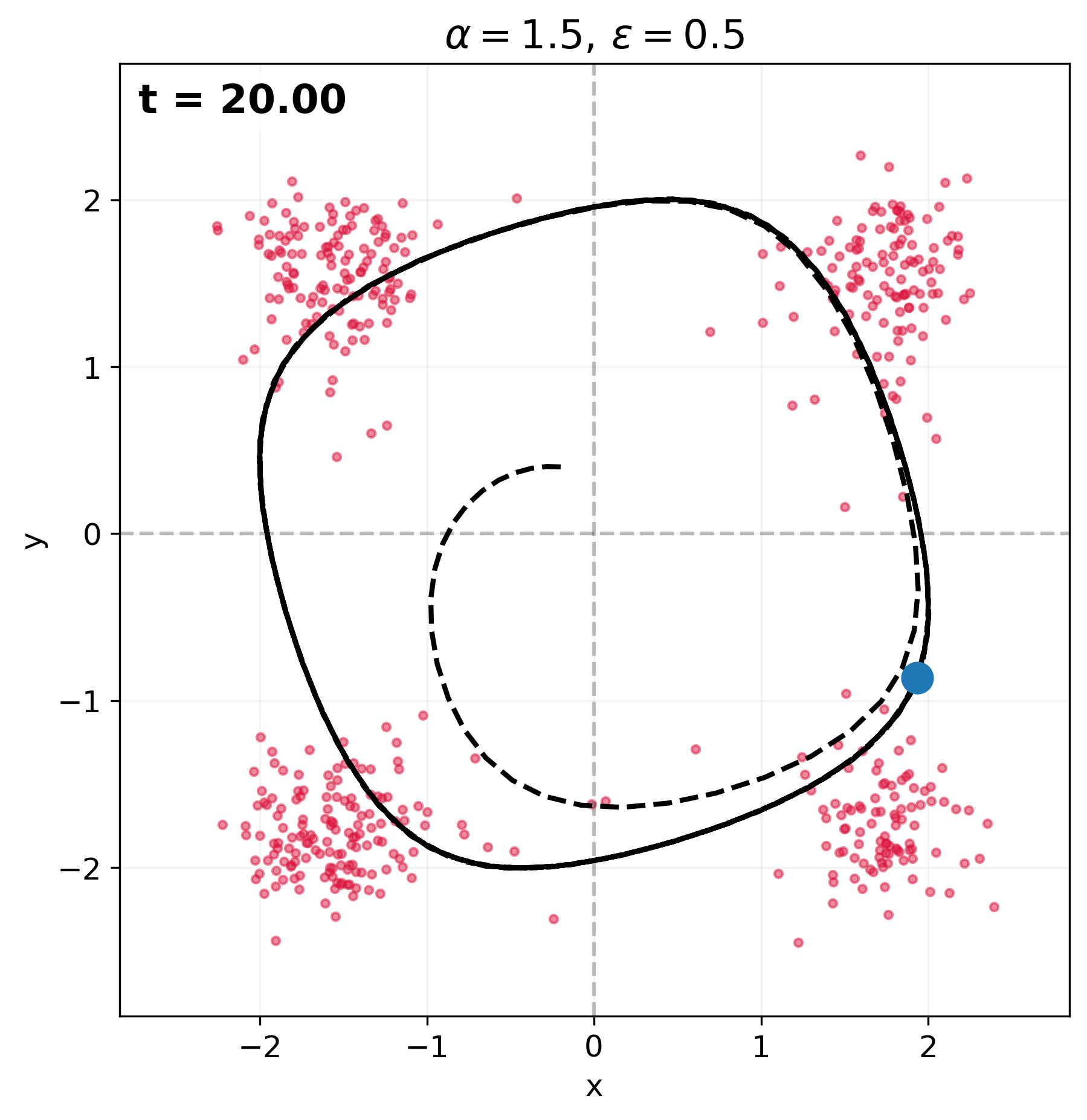}
    \end{subfigure}
    \caption{Particle snapshots for $\varepsilon = 0.25$ (left) and $\varepsilon = 0.5$ (right) at times $t = 0, 5, 12.5, 20$. The red points represent the positions of the $N = 500$ particles, the blue dot stands for the solution of the deterministic trajectory at time $t$. Its trajectory is displayed as a black dotted line, which merges with the limit cycle represented as a continuous black curve.}
    \label{fig:particles_snapshot}
\end{figure}

\clearpage

\medskip

\noindent \textbf{Acknowledgements.} JCM acknowledges the support of the ERC MSCA grant SLOHD (101203974), and of the French National Research Agency (ANR) under the France 2030 grant ANR-24-RRII-0002 operated by the Inria Quadrant Program.

\small
\bibliographystyle{plain}
\bibliography{minmax}

\end{document}